\newtheorem{thm}{Theorem}
\newtheorem{lem}{Lemma}
\newtheorem{cor}{Corollary} 
\newtheorem{prob}{Problem}
\title{Solution and asymptotic analysis of a boundary value problem in the spring-mass model of running}
\author{\L ukasz P\l ociniczak\thanks{Faculty of Pure and Applied Mathematics, Wroc{\l}aw University of Science and Technology, Wyb. Wyspia{\'n}skiego 27, 50-370 Wroc{\l}aw, Poland}$\;^,$\footnote{Email: lukasz.plociniczak@pwr.edu.pl}, Zofia Wr{\'o}blewska$^*$}
\date{}
\begin{document}
	\maketitle
	
	\begin{abstract}	
		We consider the classic spring-mass model of running which is built upon an inverted elastic pendulum. In a natural way, there arises an interesting boundary value problem for the governing system of two nonlinear ordinary differential equations. It requires us to choose the stiffness to ascertain that after a complete step, the spring returns to its equilibrium position.  Motivated by numerical calculations and real data we conduct a rigorous asymptotic analysis in terms of the Poicar\'e-Lindstedt series. The perturbation expansion is furnished by an interplay of two time scales what has an significant impact on the order of convergence. Further, we use these asymptotic estimates to prove that there exists a unique solution to the aforementioned boundary value problem and provide an approximation to the sought stiffness. Our results rigorously explain several observations made by other researchers concerning the dependence of stiffness on the initial angle of the stride and its velocity. The theory is illustrated with a number of numerical calculations. \\
		
		\noindent\textbf{Keywords}: singular perturbation theory, boundary value problem, Poincar\'e-Lindstedt series, elastic pendulum, running\\
		
		\noindent\textbf{AMS Classification}: 34E10, 34B15
	\end{abstract}
	
	\section{Introduction}
	Running is the fundamental way of rapid legged locomotion for terrestrial animals and due to its naturalness and everyday occurrence, it seems that there is nothing unusual in it. However, this way of movement requires a complex and accurate collaboration of neural, motor, and muscular systems with respect to the changing terrain \cite{Dic10}. The usual and common distinction between walking and running is that the latter contains an aerial phase during which the animal has no contact with the ground. This working definition is sufficient for us, however as research shows, it is too narrow to include certain animals or conditions of locomotion (see \cite{Cav76} for a broader classification based on the energetic concepts). Running is not just walking with a higher speed and there is a remarkable transition of one mean of motion to the other \cite{Ale89}. Legged locomotion of various animals has been investigated vigorously through the decades and it merges biology, engineering and mathematics into one successful endeavour. This topic was investigated by Aristotle \cite{Nus85} while the first biomechanical treatment was given by a seventeenth century Italian physiologist and mathematician Giovanni Borelli \cite{Bor43,Maq89}. The Reader can find interesting modern surveys concerning scientific accounts of locomotion in \cite{Bie18,Dic10,Tak17}.
	
	For humans, running gains another important meaning apart of being simple way of locomotion - namely, the sports. As a primal form of movement, running accompanies man from the very beginning. It is difficult not to agree with the fact that running is probably the simplest and the most natural sport that exist \cite{Ale91}. This is why it forms a basis for many other disciplines. In sport science it is quite common to analyse and to attempt to describe some movements that are specific to the considered discipline \cite{Fer99}. Many investigations lead to a better understanding of human performance during races of different lengths \cite{Dan78,And96,Wil87} which, in turn, provided better insights on improved training methods \cite{Dan13}. Mathematically, competitive running was described by Keller in his variational model \cite{Kel73} based on a physiological findings of Hill \cite{Hil25}. Some further generalizations and analysis are given in \cite{Pri93,Beh93,Tib05}. 
	
	Mathematical modelling is an important part of the biomechanics. In this paper we are concerned with the so-called spring-mass model of running which is based on an inverted elastic pendulum (see the seminal papers \cite{Bli89,Mc90}). However, some earlier attempts accurately described walking with a similar construction utilizing inverted pendulum \cite{Moh80}. These first investigations lead to a proliferation of interesting concepts, models, and methods that helped to design walking robots \cite{Col05}. Further generalizations of the spring-mass model are based on including additional legs, dampers and segments \cite{Gey06,Sie82,Sri08,Ale92,Rum08}. The two-legged version has an interesting bifurcation structure \cite{Mer15}. A very thorough review of models of legged locomotion is given in \cite{Hol06}.
	
	In this paper we revisit the conceptual spring-mass model. Our focus is to solve the naturally arising boundary-value problem for the spring stiffness via the asymptotic analysis. However, being essentially an elastic pendulum the mathematical description of the system consists of two nonlinear ordinary differential equations which possess a rich geometrical structure \cite{Geo99,Hol02} and chaotic behaviour \cite{Cue92,Ala06}. 
	
	The paper is structured as follows. In Section 2 we state derive the model and state the main boundary value problem. By a numerical calculations we motivate that a perturbative expansion with respect to the large spring stiffness is relevant for the solution of the problem. Section 3 contains asymptotic analysis of main equations with the use of Poincar\'e-Lindstedt series. The material is divided into two parts: one gives heuristic derivation of the perturbation solutions while the other rigorously justifies them. In Section 4 we solve the initially stated boundary value problem and provide an approximation for its solution. We close the paper with several numerical calculations verifying our theory. 
	
	\section{Model statement and motivation}
	The spring-mass model of running assumes that each leg can be described as an inverted elastic pendulum. For the grounded phase of the jump, we assume the situation depicted on Fig. \ref{fig:Schematic}. 
	
	\begin{figure}
		\centering
		\begin{tikzpicture}[scale = 2]
		% Wahadło
		\draw (0:0) -- (120:2);
		\draw [dotted] (0:0) -- (60:2);
		\draw [dashed] (0,0) -- (0,5);
		\draw [dashed, <->] (210:0.4) -- (125:5) node[midway, below, sloped] {$L$};
		
		% Sprężynka
		\draw[decoration={aspect = 0.3, segment length = 3mm, amplitude = 3mm, coil}, decorate] (120:2) -- (120:4.6);
		\draw[dotted, decoration={aspect = 0.3, segment length = 3mm, amplitude = 3mm, coil}, decorate] (60:2) -- (60:4.6);
		
		% Masa
		\draw (120:5) circle [radius = 0.4] node {$m$};
		\draw [dotted] (60:5) circle [radius = 0.4];
		
		% Podstawa
		\fill [pattern = north east lines] (-3,-0.25) rectangle (3, 0);
		\draw [very thick] (-3,0) -- (3,0);
		
		% Kąt
		\draw (90:3) arc (90:120:3);
		\draw (105:1.5) node {$\theta$};
		
		% Trajektoria
		\draw [dotted] (120:5) .. controls (90:3.5) .. (60:5);
		
		\end{tikzpicture}
		\caption{A schematic of the main model.}
		\label{fig:Schematic}
	\end{figure}
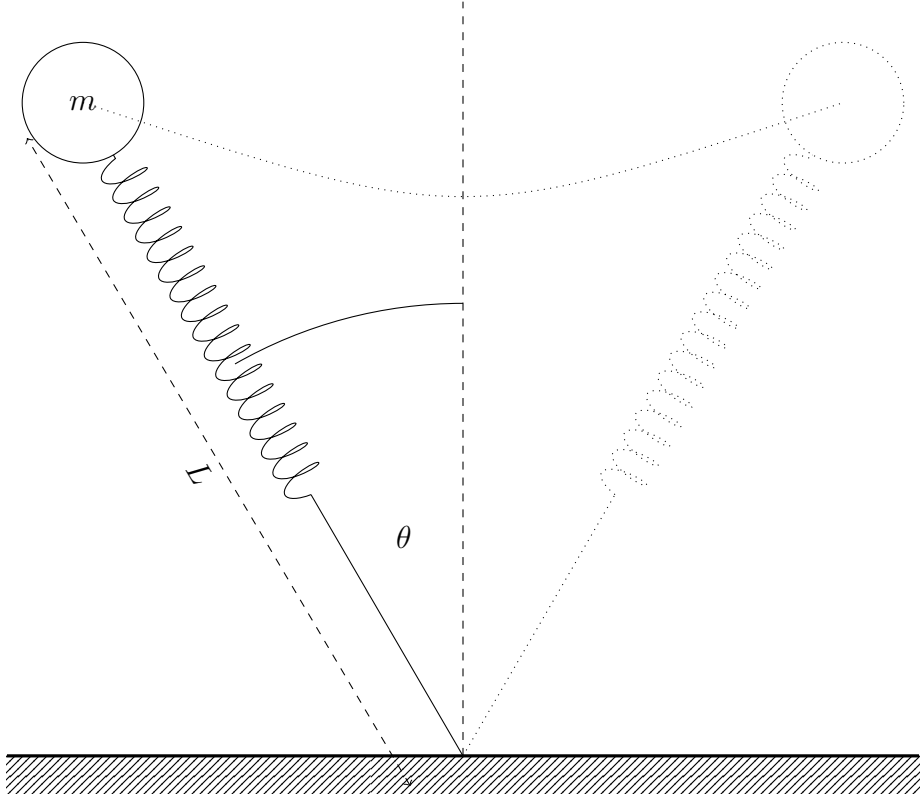
	
	Let $(x(t),y(t))$ denote the Cartesian coordinates of the point mass $m$. Balancing respective components of gravity and stiffness we can write
	\begin{equation}
	\left\{
	\begin{array}{l}
	m\dfrac{d^2 x}{dt^2} = k \left(l_0-\sqrt{x^2+y^2}\right)\sin\theta, \vspace{6pt}\\
	m\dfrac{d^2 y}{dt^2} = k \left(l_0-\sqrt{x^2+y^2}\right)\cos\theta- m g,
	\end{array}
	\right.
	\end{equation}
	where $l_0$ is the equilibrium length of the spring and $k$ is the stiffness. Plugging in the Cartesian formulas for the angle we obtain
	\begin{equation}
	\label{eqn:MainEqCartesian}
	\left\{
	\begin{array}{l}
	\dfrac{d^2 x}{dt^2} = \dfrac{k }{m}x\left(\dfrac{l_0}{\sqrt{x^2+y^2}}-1\right), \vspace{6pt}\\
	\dfrac{d^2 y}{dt^2} = \dfrac{k}{m}y\left(\dfrac{l_0}{\sqrt{x^2+y^2}}-1\right)- g.
	\end{array}
	\right.
	\end{equation}
	The initial conditions are the following
	\begin{equation}
	\label{eqn:ICCartesian}
	x(0) = -l_0 \sin\alpha, \quad \frac{dx}{dt}(0) = u, \quad y(0) = l_0 \cos\alpha, \quad \frac{dy}{dt}(0) = v,
	\end{equation}
	where $u$ and $v$ are, respectively, horizontal and vertical velocities. It is very convenient to cast the governing equations (\ref{eqn:MainEqCartesian}) into nondimensional polar form. To this end, we naturally scale $x$ and $y$ with respect to $l_0$ and choose the pendulum time scale $\sqrt{g/l_0}$. Moreover, we introduce the nondimensional spring length $L = \sqrt{x^2+y^2}/l_0$ and the polar angle $\theta$. Eventually, the polar form of (\ref{eqn:MainEqCartesian}) is the following
	\begin{equation}
	\label{eqn:MainEq}
	\left\{
	\begin{array}{l}
	L\dfrac{d^2 \theta}{dt^2} + 2 \dfrac{dL}{dt} \dfrac{d\theta}{dt} = \sin\theta, \vspace{6pt}\\
	\dfrac{d^2 L}{dt^2} - \left(\dfrac{d\theta}{dt}\right)^2 L = K \left(1-L\right)-\cos\theta,
	\end{array}
	\right.
	\end{equation}
	where the only nondimensional parameter (spring stiffness) present in the equations is given by
	\begin{equation}
	K = \frac{k l_0}{m g}.
	\end{equation}
	The initial conditions for the polar coordinate system have the form
	\begin{equation}
	\label{eqn:ICPolar}
	\theta(0) = -\alpha, \quad \frac{d\theta}{dt}(0) = \theta_d, \quad L(0) = 1, \quad \frac{dL}{dt}(0) = - L_d,
	\end{equation}
	with
	\begin{equation}
	\label{eqn:ThetaLdUV}
	\theta_d = U \cos\alpha-V \sin\alpha, \quad L_d = U \sin\alpha + V \cos\alpha,
	\end{equation}
	where we have defined the horizontal and vertical Froude numbers
	\begin{equation}
	U = \frac{u}{\sqrt{g l_0}}, \quad V = \frac{v}{\sqrt{g l_0}}.
	\end{equation}
	In Tab. \ref{tab:Parameters} we have collected all nondimensional parameters appearing in the model. Notice that usually $U$ is of order of unity, while $V$ and $\alpha$ are small. 
	
	\begin{table}
		\centering
		\begin{tabular}{llr}
			\toprule
			Symbol & Description & Typical value \\ 
			\midrule
			$\alpha$ & Angle of attack & $0.2-0.8$\\
			$U$ & Horizontal Froude number & $0.8-2.6$\\
			$V$ & Vertical Froude number & $0.05-0.5$\\
			\bottomrule
		\end{tabular}
		\caption{Typical values of all appearing nondimensional physical parameters. Data based on \cite{Far93} and calculated for various animals.}
		\label{tab:Parameters}
	\end{table}
	
	By the standard theory of ordinary differential equations the system (\ref{eqn:MainEq}) with (\ref{eqn:ICPolar}) possesses a unique solution in the vicinity of $t=0$. On the other hand, from the point of view of applications a question of completely different nature is relevant. Since the inverted elastic pendulum models the forwardly hoping leg we are faced with a peculiar boundary value problem to solve.
	\begin{prob}
		\label{prob:main}
		Let $(\theta(t;K),L(t,K))$ be the solution of the system (\ref{eqn:MainEq}) with (\ref{eqn:ICPolar}). Find $K^*$ and the smallest time $t^*>0$ satisfying
		\begin{equation}
		\theta(t^*, K^*) = \alpha, \quad L(t^*,K^*) = 1.
		\end{equation} 
	\end{prob}
	This means that the spring stiffness $K$ has to be determined to ensure that during the first cycle the spring will return to the equilibrium length precisely at the time for which the pendulum travels to the angle $\alpha$. This represents the grounded phase of the step, i.e. the leg completes the full cycle before jumping into the aerial phase (see Fig. \ref{fig:Schematic}). 
	
	The above problem can easily be solved numerically using the shooting method as was done for example in \cite{Mc90}. To illustrate this, we apply a numerical solver based on the forth order Runge-Kutta method for solving the initial value problem (\ref{eqn:MainEq}) and (\ref{eqn:ICPolar}) with a given $K$. Then, the point $t^*$ is found such that $\theta(t^*) = \alpha$. Next, $L(t^*)$ is compared with $1$ and according to the difference the value of $K$ is corrected via the secant method for the next iteration. The loop continues until the required accuracy is attained. 
	
	Numerically found values of $K^*$ are depicted on Fig. \ref{fig:Stiffness} with respect to the initial angle of attack $\alpha$ for several values of $U$ and vice-versa. We see that in general $K^*$ is a moderately large parameter especially for small angles but also for realistic regime of constants (see Tab. \ref{tab:Parameters}). The dependence on $\alpha$ and $U$ is monotone and we can anticipate that for $U$ of orders of unity $K^*$ has a quadratic component of $U$. In the following sections we will prove these claims along with finding asymptotic expansions of $L$ and $\theta$ and proving existence and uniqueness for Problem \ref{prob:main}.
	
	\begin{figure}[!htbp]
		\centering
		\includegraphics[width=0.49\textwidth]{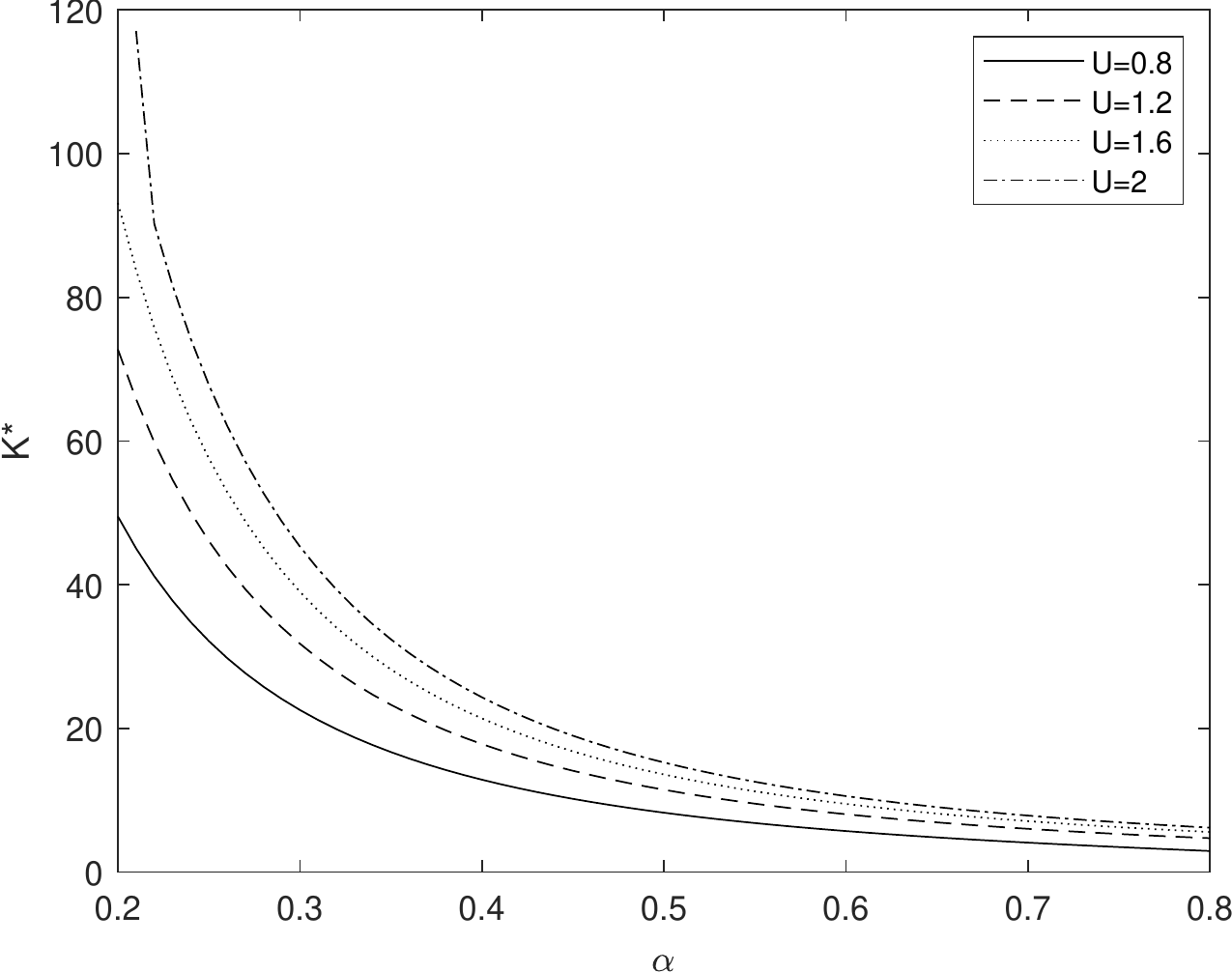}
		\includegraphics[width=0.49\textwidth]{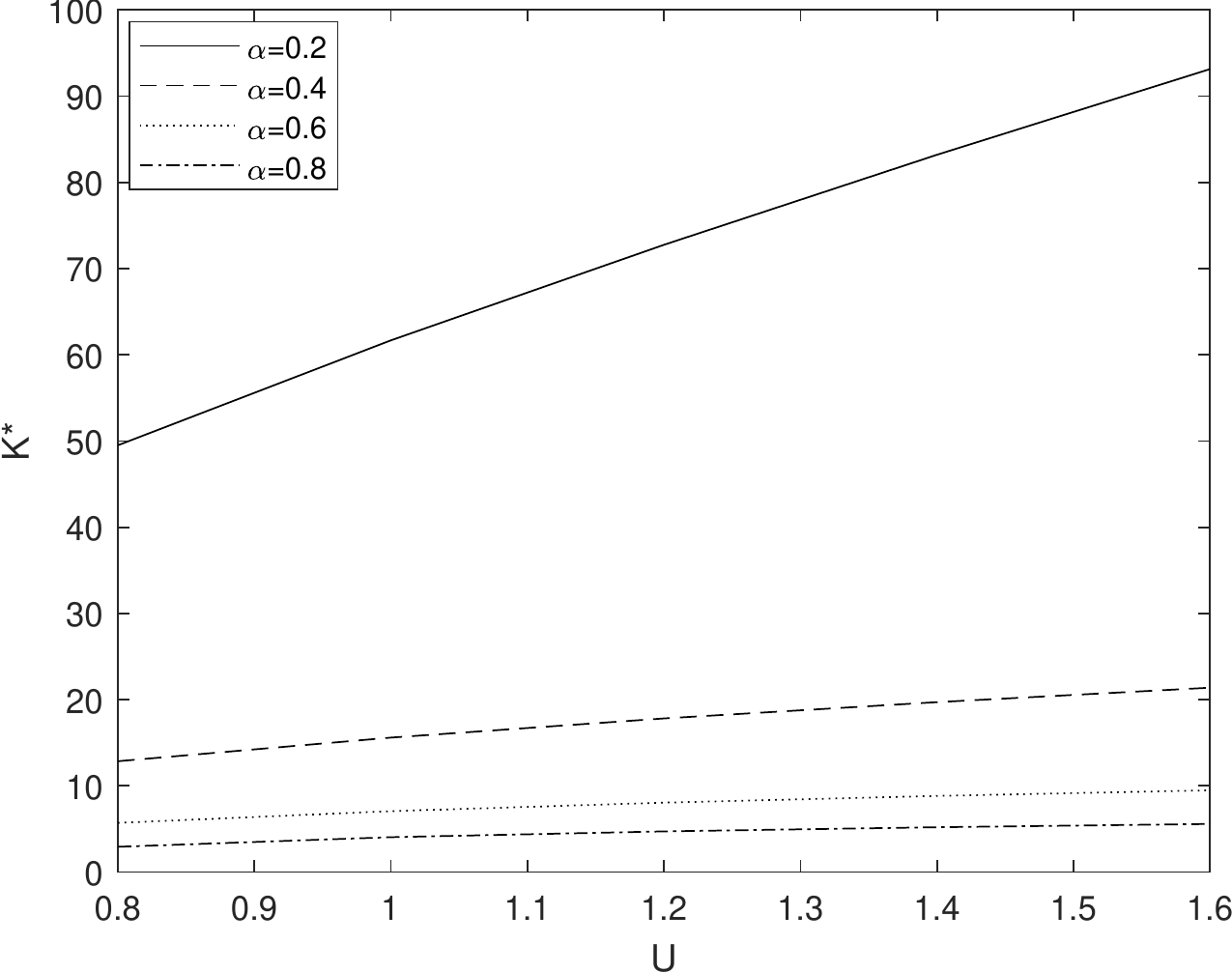}
		\caption{Numerical solution of Problem \ref{prob:main} for different sets of $\alpha$ and $U$ with a fixed value of $V=0.1$.}
		\label{fig:Stiffness}
	\end{figure}
	
	\section{Perturbation theory}
	
	In this section we provide an asymptotic analysis for solutions of the system (\ref{eqn:MainEq}) with (\ref{eqn:ICPolar}) for large $K$. First, to get a hint how the possible asymptotic solution may look like we proceed with the formal singular perturbation theory. Then, we provide rigorous proofs concerning the order of approximation. 
	
	\subsection{Formal expansions}
	
	The above numerical results (compare \cite{Mc90}) indicate that for a wide range of realistic initial conditions the appropriate stiffness being the solution of the Problem \ref{prob:main} is large. This suggests that we can gain a meaningful insight by expanding (\ref{eqn:MainEq}) for $K\rightarrow\infty$. To this end we will use the standard Poincar\'e-Lindstedt method (see \cite{Hol12}).
	
	We will see that solutions live on different time scales. Since $K$ is the factor of $L$ in the second equation in (\ref{eqn:MainEq}) we expect that the dynamics takes place on the fast $\sqrt{K} t$ scale. On the other hand, the equation for the angle indicates that the main time scale for $\theta$ is the slow one $t$. Furthermore, due to the coefficient $\left(d\theta/dt\right)^2$ the period of oscillations is modulated by the slower time $t$. However, as we will shortly see, the solution of Problem \ref{prob:main} occurs on the fast scale and it will be more convenient to expand both variables with respect to that. 
	
	First, we will use the Poincar\'e-Lindstedt series to find the asymptotic behaviour of $L$ and $\theta$ as $K\rightarrow\infty$. To simplify matters we set
	\begin{equation}
	\label{eqn:epsilon}
	\epsilon = \frac{1}{\sqrt{K}},
	\end{equation}
	hence we are looking for an expansion as $\epsilon\rightarrow 0$. Being lead by the above discussion we introduce the following fast time scales
	\begin{equation}
	\label{eqn:TimeScales}
	\tau = \frac{t}{\epsilon}, \quad \tau^+ = \omega(\epsilon)\tau = \left(1+\epsilon \omega_1 + \epsilon^2 \omega_2+...\right)\frac{t}{\epsilon}.
	\end{equation}
	Now, substituting $\tau^+$ from (\ref{eqn:TimeScales}) into (\ref{eqn:MainEq}) we obtain
	\begin{equation}
	\label{eqn:LTau+}
	\begin{split}
	\omega(\epsilon)^2 L''+\left(1-\omega(\epsilon)^2 (\theta')^2\right)L = 1 - \epsilon^2\cos\theta, \quad L\theta''+2 L' \theta' = \omega(\epsilon)^{-2}\epsilon^2 \sin\theta,
	\end{split}
	\end{equation}
	where now $L=L(\tau^+)$, $\theta=\theta(\tau^+)$, and prime denotes the derivative with respect to $\tau^+$. If we make the following formal asymptotic expansions
	\begin{equation}
	L = L_0 + \epsilon L_1 + \epsilon^2 L_2 + ..., \quad \theta = \theta_0+\epsilon\theta_1+\epsilon^2\theta_2 + ...,
	\end{equation}
	the equations become
	\begin{equation}
	\begin{split}
	\omega(\epsilon)^2 \left(L''_0 + \epsilon L''_1 + \epsilon^2 L''_2 + ...\right) &+\left(1 - \omega(\epsilon)^2\left(\theta'_0+\epsilon\theta'_1+\epsilon^2\theta_2'+...\right)^2\right)\left(L_0 + \epsilon L_1 + \epsilon^2 L_2 + ...\right) \\
	&= 1-\epsilon^2 \cos\theta,
	\end{split}
	\end{equation}
	and
	\begin{equation}
	\begin{split}
	\left(L_0 + \epsilon L_1 + \epsilon^2 L_2 + ...\right)\left(\theta_0''+\epsilon \theta_1'' + \epsilon^2\theta_2'' +...\right) + 2 &\left(L_0' + \epsilon L'_1 + \epsilon^2 L'_2 + ...\right)\left(\theta'_0+\epsilon\theta'_1+\epsilon^2\theta_2'+...\right)\\ 
	&= \omega(\epsilon)^{-2} \epsilon^2 \sin\left(\theta_0+\epsilon\theta_1+\epsilon^2\theta_2+...\right).
	\end{split}
	\end{equation}
	Next, collecting respective coefficients of $\epsilon^n$ in (\ref{eqn:LTau+}) we obtain the following chain of differential equations
	\begin{equation}
	\begin{array}{lll}
	\epsilon^0:& L''_0+\left(1-(\theta_0')^2\right)L_0 = 1, & L_0(0)=1, \quad L'_0(0) = 0, \\
	\epsilon^1:& L''_1+ \left(1-(\theta_0')^2\right)L_1 = -2\omega_1 L_0''+2\left(\theta_0'\theta_1'+\omega_1(\theta_0')^2\right) L_0, & L_1(0) = 0, \quad L'_1(0) = -L_d,\\
	%\epsilon^2:& L''_2+L_2 = - 2 \omega_1 L''_1 + \theta_d^2 L_0 - \cos\alpha, & L_2(0) = 0, \quad L_2(0) = L_d \omega_1.\\
	%\epsilon^3:& L''_3+L_3 = -2\omega_2 L''_1+\theta_d^2 L_1 + 2\theta_d \theta_{dd}\tau^+ +\left(\theta_d\sin\alpha\right)\tau^+, & L_3(0) = 0, \quad L'_3(0) = L_d \omega_2, \\
	... & &
	\end{array}
	\end{equation}
	and
	\begin{equation}
	\begin{array}{lll}
	\epsilon^0:& L_0\theta_0''+2 L_0' \theta_0' = 0, & \theta_0(0)=-\alpha, \quad \theta'_0(0) = 0, \\
	\epsilon^1:& L_0\theta''_1+2 L_0' \theta_1' = -L_1 \theta_0''-2 L_1'\theta_0', & \theta_1(0) = 0, \quad \theta'_1(0) = \theta_d,\\
	%\epsilon^2:& L''_2+L_2 = - 2 \omega_1 L''_1 + \theta_d^2 L_0 - \cos\alpha, & L_2(0) = 0, \quad L_2(0) = L_d \omega_1.\\
	%\epsilon^3:& L''_3+L_3 = -2\omega_2 L''_1+\theta_d^2 L_1 + 2\theta_d \theta_{dd}\tau^+ +\left(\theta_d\sin\alpha\right)\tau^+, & L_3(0) = 0, \quad L'_3(0) = L_d \omega_2, \\
	... & &
	\end{array}
	\end{equation}
	To save space, we have written only the two first orders of $\epsilon$ since further equations complicate its form very quickly. When we obtain these initial approximation we will see that subsequent equations simplify considerably.
	
	Starting from the $\epsilon^0$ equations we can multiply the one for $\theta_0$ by $L_0$, integrate, and obtain the conservation of angular momentum
	\begin{equation}
	L_0^2 \theta_0' = 0,
	\end{equation}
	where we have used the fact that $\theta_0'(0) = 0$. This can only be satisfied if $\theta_0(\tau^+) = -\alpha$. Therefore, the $L_0$ equation yields the solution $L_0(\tau^+) = 1$ and hence, the leading order solutions are constant. The $\epsilon^1$ equations are now simplified 
	\begin{equation}
	L_1'' + L_1 = 0, \quad \theta_1''=0,
	\end{equation}
	which quickly can be solved to obtain $L_1(\tau^+)=-L_d\sin\tau^+$ and $\theta_1(\tau^+) = \theta_d \tau^+$. These simple initial approximations simplify further asymptotic equations. The are the following
	\begin{equation}
	\begin{array}{lll}
	\epsilon^2:& L''_2+L_2 = - 2 \omega_1 L''_1 + \theta_d^2 - \cos\alpha, & L_2(0) = 0, \quad L'_2(0) = L_d \omega_1,\\
	\epsilon^3:& L''_3+L_3 =  & \\
	&-\left(2\omega_2+\omega_1^2\right) L''_1-2\omega_1 L_2''+\theta_d^2 L_1 + 2 \theta_d \left(\theta_2' +2\omega_1 \theta_d\right) - (\sin\alpha) \theta_1, & L_3(0) = 0, \quad L'_3(0) = L_d (\omega_2-\omega_1^2), \\
	... & &
	\end{array}
	\end{equation}
	and
	\begin{equation}
	\begin{array}{lll}
	\epsilon^2:& \theta_2''+2\theta_d L_1' = -\sin\alpha, & \theta_2(0) = 0, \quad \theta_2'(0) = -\theta_d \omega_1,\\
	\epsilon^3:& \theta_3''+L_1\theta_2'' = -2\left(L_1'\theta_2'+\theta_d L_2'\right) + (\cos\alpha) \theta_1+\omega_1\sin\alpha, & \theta_3(0) = 0, \quad \theta'_3(0) = -\theta_d (\omega_2-\omega_1^2), \\
	... & &
	\end{array}
	\end{equation}
	Notice that the $\epsilon^2$ order $L$-equation will not be forced by a resonant term only if we take $\omega_1 = 0$. Similarly, in the next order equation we can eliminate the secular terms when we take
	\begin{equation}
	\omega_2 = -\frac{1}{2}\theta_d^2,
	\end{equation}
	since $L''_1 = - L_1$. Next, solving for $L_2$ yields the (formal) behaviour of the solution up to the second order
	\begin{equation}
	\label{eqn:LApprox}
	\begin{split}
	L(t) \approx \widetilde{L}(t) := 1-\epsilon L_d \sin\tau^+(t)-\epsilon^2 \left(\cos\alpha-\theta_d^2\right)\left(1-\cos\tau^+(t)\right), \\
	\text{where} \quad \tau^+(t)=\widetilde{\omega}(\epsilon)\frac{t}{\epsilon}, \quad \widetilde{\omega}(\epsilon)=1-\frac{\theta_d^2}{2}\epsilon^2, \quad \epsilon\ll 1.
	\end{split}
	\end{equation}
	On the other hand, now we can go back to the $\epsilon^2$ order equation for the angle and solve it immediately to obtain the approximation for $\theta$
	\begin{equation}
	\label{eqn:ThetaApproxTau}
	\theta(t) \approx \widetilde{\theta}(t) := -\alpha + \epsilon \theta_d \tau^+(t) -\frac{1}{2} \epsilon^2 (\sin\alpha) (\tau^+(t))^2 + 2\epsilon^2 L_d\theta_d \left(1-\cos\tau^+(t)\right),
	\end{equation}
	where $\tau^+(t)$ is defined in (\ref{eqn:LApprox}). 
	
	Having obtained the above approximation in the fast $\tau^+$ scale it is interesting to analyse the angle equation in (\ref{eqn:MainEq}) in the slow $t$ scale. Notice that since $L(t)\approx 1$ for $\epsilon \ll 1$ in the first approximation the fast time scale $\tau^+$ enters the equation only through the damping term. In that case we obtain a second order equation with a quickly varying coefficient which could be tackled by the homogenization theory (see for ex. \cite{Pav08}). However, in our case in order to find the leading order of $\theta$ it is more convenient to use the general multiple-scales method. 
	
	We start by assuming that $\theta = \theta(t,\tau)$. Since we are interested only in the leading order form of the asymptotic expansion we do not have to use the strained time scale $\tau^+$ which would introduce higher order terms. As an expansion for $L$ we use 
	\begin{equation}
	L(\tau) = 1 - \epsilon \lambda_1(\tau) - \epsilon^2 \lambda_2(\tau) - ...,
	\end{equation}
	while the angle is expanded as follows
	\begin{equation}
	\theta(t,\tau) = \theta_0(t,\tau) + \epsilon \theta_1(t,\tau) + \epsilon^2 \theta_2(t,\tau)+ ...
	\end{equation}
	The exact form of $\lambda_1$ can be inferred from (\ref{eqn:LApprox}). The initial conditions can be translated into the expansion as
	\begin{equation}
	\label{eqn:ICThetaPert}
	\begin{array}{llll}
	\theta_0(0,0) = -\alpha, & \dot{\theta}_0(0,0) = \theta_d, & \theta'_0(0,0) = 0, &\\
	\theta_i(0,0) = 0, & \dot{\theta}_i(0,0) = 0, & \theta'_i(0,0) = 0, & i > 0,
	\end{array}
	\end{equation}
	where, similarly as above, the dot indicates the derivative with respect to $t$ while prime is the derivative with respect to $\tau$. The equation can now be expanded to yield
	\begin{equation}
	\begin{split}
	&\left(1-\epsilon \lambda_1 - ...\right)\left(\ddot{\theta}_0+\epsilon^{-2} \theta''_0 + 2\epsilon^{-1}\dot{\theta}'_0 + \epsilon \left(\ddot{\theta}_1+\epsilon^{-2}\theta''_1+2\epsilon^{-1}\dot{\theta}'_1\right)+\epsilon^2\left(\ddot{\theta}_2+\epsilon^{-2}\theta''_2+2\epsilon^{-1}\dot{\theta}'_2\right)\right) \\
	& - 2\left(\lambda'_1+\epsilon\lambda_2'+...\right)\left(\dot{\theta}_0+\epsilon^{-1} \theta'_0 + \epsilon \dot{\theta}_1 + \theta'_1 +... \right) = \sin\left(\theta_0+\epsilon\theta_1+\epsilon^2\theta_2+...\right).
	\end{split}
	\end{equation}
	Comparing various powers of $\epsilon$ yields an array
	\begin{equation}
	\begin{array}{ll}
	\epsilon^{-2}:& \theta_0'' = 0, \\
	\epsilon^{-1}:& 2\dot{\theta}'_0  - \lambda_1\theta''_0 + \theta''_1 - 2\lambda_1'\theta_0' = 0,\\
	\epsilon^{0}:& \ddot{\theta}_0 +2\dot{\theta}'_1 - \lambda_1\left(2\dot{\theta}'_0+\theta''_1\right)-\lambda_2 \theta''_0 -2\lambda'_1\left(\dot{\theta}_0+\theta'_1\right) - 2\lambda'_2\theta'_0 + \theta''_2 = \sin\theta_0,\\
	%\epsilon^{1}:& \ddot{\theta}_1+2\dot{\theta}'_2 - \lambda_1\left(\ddot{\theta}_0+\theta''_2\right)- 2\left(\lambda_1'\left(\dot{\theta}_1+\theta_2'\right)+\lambda'_2\left(\dot{\theta}_0+\theta'_1\right)\right)+\theta''_3 = \left(\cos\theta_0\right)\theta_1, \\
	...
	\end{array}
	\end{equation}
	The first equation immediately gives us $\theta_0(t,\tau) = C_0(t) + D_0(t)\tau$. But from the initial conditions (\ref{eqn:ICThetaPert}) we have $D_0(t) = 0$ and hence $\theta'_0 = 0$. As anticipated, the leading order term does not depend on the fast time scale $\tau$. Similarly, for the $\epsilon^{-1}$ equation initial conditions yield $\theta'_1 = 0$ and hence $\theta_1(t,\tau) = C_1(t)$. 
	
	The first non-trivial behaviour comes from the $\epsilon^0$ equation which reduces to
	\begin{equation}
	\ddot{\theta}_0 + \theta''_2 - 2\lambda'_1\dot{\theta}_0 = \sin\theta_0.
	\end{equation}
	Now, since by assumption $t$ and $\tau$ are independent variables, we can average over the fast time scale to extract information only about the slow evolution. Integrating above yields
	\begin{equation}
	\ddot{\theta}_0 + \overline{\theta''_2} - 2\dot{\theta}_0\overline{\lambda'_1} = \sin\theta_0,
	\end{equation}
	where overline denotes the average over $\tau$ changing from $0$ to $2\pi$. Since $\lambda_1$ is $2\pi$-periodic up to $O(\epsilon)$ due to (\ref{eqn:LApprox}), we have $\overline{\lambda'_1} = 0$. Moreover, since $\tau$ enters the equation only through the periodic terms we anticipate that $\theta_2$ is $\tau$ periodic and hence the average of its derivatives vanishes. This leaves
	\begin{equation}
	\label{eqn:Theta0}
	\ddot{\theta}_0 = \sin\theta_0,
	\end{equation}
	which is the leading order equation for the evolution of $\theta_0$. This approximation suffices for our needs and we will not continue the multiple scales analysis. We therefore claim that
	\begin{equation}
	\label{eqn:ThetaApprox}
	\theta(t) \approx \widetilde{\theta_0}(t), \quad \epsilon\ll 1, 
	\end{equation}
	where $\widetilde{\theta_0}$ is given by the solution of (\ref{eqn:Theta0}) (it can be solved analytically in terms of the Jacobi amplitude function $am$). Notice also that if we put $t = \epsilon \tau$ and expand the solution of (\ref{eqn:Theta0}) with respect to $\epsilon\rightarrow 0$ for fixed $\tau$ the first two terms correspond to the nonperiodic part of (\ref{eqn:ThetaApproxTau}) with $\tau^+$ replaced by $\tau$. Moreover, if we make the reasonable small angle assumption $\sin\theta = \theta + O(\theta^3)$ we will have
	\begin{equation}
	\theta(t) \approx -\alpha\cosh t + \theta_d \sinh t, \quad \epsilon\ll 1, \quad \alpha \ll 1.
	\end{equation}
	
	Of course we can continue the multiple scales approach and obtain higher order terms. For this program to be successful, we should also include the $t$-scale expansion of the pendulum length $L$. We will not pursue this topic here since approximations soon become very complicated and are not needed in what follows. 
	
	The above analysis has been intended to be formal yet illustrative to clearly state the possible form of the singular asymptotic expansion and the interplay of multiple time scales. We now proceed to the rigorous proofs of the above results.
	
	\subsection{Rigorous proofs}
	
	First, we need the following result which is a generalization of Gr\"onwall-Bellman's lemma. Its generalized version has been proven in \cite{Has66} but here, for completeness, we include a simplified proof of the case we need. 
	
	\begin{lem}\label{lem:GB}
		Let $\psi=\psi(t)$ and $f=f(t)$ be continuous and positive functions. Assume that the following inequality holds
		\begin{equation}
		\psi(t) \leq f(t) + C\int_0^t (t-s) \psi(s) ds,
		\end{equation}
		for $C>0$. Then
		\begin{equation}
		\psi(t) \leq f(t) + \sqrt{C} \int_0^t \sinh\left(\sqrt{C}(t-s)\right)f(s)ds.
		\end{equation}
		In particular, where $f\equiv D =$const. we have
		\begin{equation}
		\psi(t) \leq D \cosh\left(\sqrt{C}t\right).
		\end{equation}
	\end{lem}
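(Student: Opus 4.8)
The plan is to read the hypothesis as a fixed-point-type estimate for the Volterra operator with kernel $C(t-s)$ and to extract its resolvent explicitly. Define the linear operator $(T\phi)(t) := C\int_0^t (t-s)\phi(s)\,ds$, so that the assumed inequality reads $\psi \le f + T\psi$. Since the kernel $C(t-s)$ is nonnegative for $s\le t$, the operator $T$ is monotone, i.e. $\phi_1 \le \phi_2$ implies $T\phi_1 \le T\phi_2$. Substituting the hypothesis into itself and using monotonicity, an induction gives $\psi \le \sum_{k=0}^{n-1} T^k f + T^n \psi$ for every $n\ge 1$. The whole argument then reduces to summing the Neumann series $\sum_{k\ge 0} T^k f$ and discarding the remainder.

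First I would compute the iterated kernels. An induction shows that $T^n$ has kernel $C^n (t-s)^{2n-1}/(2n-1)!$; the inductive step is the elementary integral $\int_s^t (t-r)(r-s)^{2n-1}\,dr = (t-s)^{2n+1}/\big(2n(2n+1)\big)$, which promotes the exponent from $2n-1$ to $2n+1$ and turns $(2n-1)!$ into $(2n+1)!$. Summing over $n\ge 1$ and recognizing the odd-power Taylor series of $\sinh$ yields $\sum_{n\ge 1} C^n (t-s)^{2n-1}/(2n-1)! = \sqrt{C}\,\sinh\!\big(\sqrt{C}(t-s)\big)$, which is precisely the resolvent kernel appearing in the statement. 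The $k=0$ term contributes the standalone $f(t)$.

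Next I would control the remainder so that the limit is legitimate. Since $\psi$ is continuous it is bounded by some $M$ on the compact interval $[0,t]$, and the explicit kernel gives $T^n\psi(t) \le M\,C^n t^{2n}/(2n)! \to 0$ as $n\to\infty$, with the convergence uniform on compact $t$-intervals. Passing to the limit in the iterated inequality then produces $\psi(t) \le f(t) + \sqrt{C}\int_0^t \sinh\!\big(\sqrt{C}(t-s)\big) f(s)\,ds$. The special case $f\equiv D$ follows from the elementary computation $\int_0^t \sinh\!\big(\sqrt{C}(t-s)\big)\,ds = \big(\cosh(\sqrt{C}t)-1\big)/\sqrt{C}$, which collapses the bound to $D + D\big(\cosh(\sqrt{C}t)-1\big) = D\cosh(\sqrt{C}t)$.

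I expect the main obstacle to be essentially bookkeeping: pinning down the closed form of the iterated kernels and justifying the termwise passage to the limit. An alternative route avoids the series entirely: setting $u(t):=\int_0^t (t-s)\psi(s)\,ds$ one has $u''=\psi$ with $u(0)=u'(0)=0$, so the hypothesis becomes the differential inequality $u''\le Cu+f$; comparing $u$ with the solution $v$ of $v''=Cv+f$ under the same initial data, the difference $w=v-u$ satisfies $w''\ge Cw$, $w(0)=w'(0)=0$, whose representation through the Green's function $\sinh$ forces $w\ge 0$ and hence $u\le v$. Either way, the crux of the proof is the nonnegativity of the resolvent (equivalently, the Green's function) $\sinh$ on $[0,\infty)$, which is what makes the monotone iteration converge to a genuine upper bound.
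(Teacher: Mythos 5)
Your main argument is correct, but it takes a genuinely different route from the paper's. The paper's proof sets $y(t)=\int_0^t(t-s)\psi(s)\,ds$, observes that $y''=\psi$, so the hypothesis becomes the differential inequality $y''-Cy\le f$, and then solves this inequality directly by factoring the operator: adding and subtracting $\sqrt{C}\,y'$, multiplying by the integrating factors $e^{\pm\sqrt{C}t}$, and integrating twice gives $y(t)\le \frac{1}{\sqrt{C}}\int_0^t \sinh\left(\sqrt{C}(t-s)\right)f(s)\,ds$, after which the claim follows from $\psi\le f+Cy$. Your primary route instead iterates the Volterra operator $T$ and sums the Neumann series; your computation of the iterated kernels $C^n(t-s)^{2n-1}/(2n-1)!$, the identification of the resolvent with $\sqrt{C}\sinh\left(\sqrt{C}(t-s)\right)$, the remainder bound $T^n\psi(t)\le M C^n t^{2n}/(2n)!\to 0$, and the interchange of sum and integral (legitimate since all terms are nonnegative, or by uniform convergence of the kernel series on compacts) are all correct, as is the computation in the constant case. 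What the paper's route buys is brevity: two elementary integrations, no induction, no series. What your route buys is generality and transparency: it exhibits the resolvent kernel explicitly, isolates nonnegativity of the kernel (monotonicity of $T$) as the only structural ingredient, and carries over verbatim to convolution kernels for which the reduction to a constant-coefficient ODE is unavailable (e.g.\ fractional-kernel Gr\"onwall inequalities). Finally, note that the ``alternative route'' you sketch at the end --- comparing $u$ with the solution $v$ of $v''=Cv+f$ and using the sign of the Green's function to force $v-u\ge 0$ --- is essentially the paper's argument recast as a comparison theorem, so you have in effect reproduced both proofs.
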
 
	\begin{proof}
		First, put $y(t) = \int_0^t (t-s)\psi(s) ds$. Then, it follows that $y'' = y$ and hence from the assumption
		\begin{equation}
		y''(t) - C y(t)\leq f(t).
		\end{equation}
		If we add and subtract $\sqrt{C}y'$ and multiply both sides by $e^{\sqrt{C}t}$ we arrive at
		\begin{equation}
		\left(e^{\sqrt{C}t}y'\right)' - \sqrt{C} \left(e^{\sqrt{C}t} y\right)' \leq e^{\sqrt{C}t} f(t).
		\end{equation}
		That is to say
		\begin{equation}
		y' - \sqrt{C} y \leq e^{-\sqrt{C}t}\int_0^t e^{\sqrt{C}s}f(s)ds,
		\end{equation}
		where we have used the fact that by the definition we have $y(0)=0$ and $y'(0)=0$. Once again, multiplying by the integrating factor $e^{-\sqrt{C}t}$ we have
		\begin{equation}
		\left(e^{-\sqrt{C}t}y\right)' \leq e^{-2\sqrt{C}t}\int_0^t e^{\sqrt{C}s}f(s)ds.
		\end{equation}
		The last integration gives
		\begin{equation}
		y(t) \leq e^{\sqrt{C}t} \int_0^t e^{-2\sqrt{C}u}\left(\int_0^u e^{\sqrt{C}s}f(s)ds\right)du = e^{\sqrt{C}t}\int_0^t \left(\int_s^t e^{-2\sqrt{C}u}du\right)e^{\sqrt{C}s}f(s)ds,
		\end{equation}
		and finally by evaluating the inner integral
		\begin{equation}
		y(t) \leq \frac{1}{\sqrt{C}}\int_0^t \sinh \left(\sqrt{C}(t-s)\right) f(s) ds.
		\end{equation}
		The assertion easily follows by the assumption and definition of $y$. The proof is complete. 
	\end{proof}
	
	We are ready to prove the main result of this paper. 
	\begin{thm}[Fast time scale asymptotics]
		\label{thm:Asym}
		Let $(L(\tau^+),\theta(\tau^+))$ be the solution of (\ref{eqn:LTau+}). Then, the following asymptotic behaviour holds
		\begin{equation}
		|L(\tau^+) - \widetilde{L}(\tau^+)| = O(\epsilon^3), \quad |\theta(\tau^+) - \widetilde{\theta}(\tau^+)|= O(\epsilon^3), \quad \text{as} \quad \epsilon\rightarrow 0^+,
		\end{equation}
		uniformly for $\tau^+\leq T<\infty$, where $\widetilde{L}$ and $\widetilde{\theta}$ are defined in (\ref{eqn:LApprox}) and (\ref{eqn:ThetaApproxTau}). 
	\end{thm}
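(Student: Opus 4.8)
The plan is to control the remainders $E_L := L - \widetilde L$ and $E_\theta := \theta - \widetilde\theta$ by recasting the governing system (\ref{eqn:LTau+}) as a pair of coupled Volterra integral equations and then closing the estimate with the generalized Gr\"onwall inequality of Lemma~\ref{lem:GB}. The starting point is a \emph{consistency} (residual) calculation: substituting the approximants $\widetilde L,\widetilde\theta$ of (\ref{eqn:LApprox})--(\ref{eqn:ThetaApproxTau}) into (\ref{eqn:LTau+}) leaves residuals $r_L,r_\theta$ which, by the very construction of the Poincar\'e--Lindstedt series (the choices $\omega_1=0$ and $\omega_2=-\theta_d^2/2$ were made precisely to annihilate the $O(\epsilon^2)$ and secular contributions), are $O(\epsilon^3)$ uniformly on $\tau^+\le T$. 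Here it is essential that on the bounded fast interval $\tau^+\le T$ one has $\widetilde\theta=-\alpha+O(\epsilon)$ and $\widetilde\theta{}'=O(\epsilon)$, so that $\cos\widetilde\theta,\sin\widetilde\theta$ stay bounded and $(\widetilde\theta{}')^2=O(\epsilon^2)$; the linear and quadratic growth in $\tau^+$ carried by $\widetilde\theta$ is harmless once $\tau^+$ is capped by $T$. I would also record that the data match to the required order, namely $E_L(0)=0$, $E_\theta(0)=0$, $E_L'(0)=O(\epsilon^3)$, $E_\theta'(0)=O(\epsilon^3)$, since the exact fast-scale initial slopes carry the factor $\omega(\epsilon)^{-1}=1+O(\epsilon^2)$.

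Next I would derive the error equations by subtracting the approximate identities from (\ref{eqn:LTau+}). For the length, writing the first equation as $\omega^2 E_L'' = -\omega^{-2}E_L\cdot\omega^2 + \omega^2\bigl[(\theta')^2 L-(\widetilde\theta{}')^2\widetilde L\bigr]-\epsilon^2(\cos\theta-\cos\widetilde\theta)-r_L$ and integrating twice in $\tau^+$ produces a representation of $E_L$ through the kernel $(\tau^+-s)$. For the angle it is cleaner to use the divergence form $(L^2\theta')'=\omega(\epsilon)^{-2}\epsilon^2 L\sin\theta$, obtained by multiplying the second equation by $L$; subtracting the corresponding relation for the approximant and integrating twice again yields $E_\theta$ with the same $(\tau^+-s)$ kernel demanded by Lemma~\ref{lem:GB}. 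In both representations the differences $\cos\theta-\cos\widetilde\theta$ and $\sin\theta-\sin\widetilde\theta$ are linearized by the mean value theorem into $O(E_\theta)$ terms carrying the small prefactor $\epsilon^2$, while the remaining couplings between the two errors come with at least one power of $\epsilon$.

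Collecting the two representations into the scalar quantity $Z(\tau^+):=|E_L|+|E_L'|+|E_\theta|+|E_\theta'|$ gives an inequality of the form
\begin{equation}
Z(\tau^+)\le A\epsilon^3 + \int_0^{\tau^+}(\tau^+-s)\bigl[C_1 Z(s)+C_2 Z(s)^2\bigr]\,ds,
\end{equation}
where $A=A(T)$ absorbs the residuals $r_L,r_\theta$ and the $O(\epsilon^3)$ data mismatch, and $C_1,C_2$ are finite constants independent of $\epsilon$ (derivative components contribute bounded constant kernels, which only improve matters on a finite interval). The decisive point is that Lemma~\ref{lem:GB} does \emph{not} require $C_1$ to be small: dropping the quadratic term for a moment, it yields $Z(\tau^+)\le A\epsilon^3\cosh(\sqrt{C_1}\,\tau^+)$, and on the fixed interval $[0,T]$ the factor $\cosh(\sqrt{C_1}\,T)$ is merely a finite constant, whence $Z=O(\epsilon^3)$ follows at once.

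The one genuine difficulty is the quadratic self-interaction $C_2 Z^2$, arising from terms such as $(E_\theta')^2\widetilde L$ in the expansion of $(\theta')^2 L-(\widetilde\theta{}')^2\widetilde L$, which is nonlinear and so escapes the linear Lemma~\ref{lem:GB}. I would dispose of it by a standard continuity/bootstrap argument on the a priori region $Z\le M\epsilon^{5/2}$: there $C_2 Z^2\le C_2 M\epsilon^{5/2}Z$, so the quadratic contribution merges into the linear kernel with coefficient $C_1+o(1)$, the lemma applies exactly as above, and the resulting bound $Z\le A\epsilon^3\cosh\bigl(\sqrt{C_1+o(1)}\,T\bigr)=O(\epsilon^3)$ lies, for $\epsilon$ small, strictly inside the region $Z\le M\epsilon^{5/2}$; hence that region is never exited and the estimate holds on all of $[0,T]$. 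The remaining work --- verifying $r_L,r_\theta=O(\epsilon^3)$ rather than merely $o(\epsilon^2)$ by expanding $\cos\widetilde\theta$, $\sin\widetilde\theta$ and $\omega(\epsilon)^{\pm2}$ one order past the terms retained in $\widetilde L,\widetilde\theta$ --- is elementary trigonometric bookkeeping that the Poincar\'e--Lindstedt construction has arranged to come out clean.
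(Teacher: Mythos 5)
Your proposal is correct in substance, but it takes a genuinely different route from the paper's proof. You run the standard defect argument: substitute $(\widetilde L,\widetilde\theta)$ into (\ref{eqn:LTau+}), check that the residuals are $O(\epsilon^3)$ uniformly on $\tau^+\le T$ (they are; this is exactly what the choices $\omega_1=0$, $\omega_2=-\theta_d^2/2$ arrange), subtract to obtain a \emph{coupled} error system, and close via Gr\"onwall together with a continuity/bootstrap step to absorb the quadratic self-interaction. The paper avoids the coupled system altogether by linearizing around the exact solution rather than around the approximant: in the equation for $\lambda=L-\widetilde L$ it retains the \emph{exact} $\theta$, so the coupling enters only as the linear term $(\epsilon\dot\theta)^2\lambda$ and through the forcing $f$ of (\ref{eqn:f}), whose $O(\epsilon^3)$ size follows softly from uniform continuity of $\dot\theta$ and $\theta$ on the shrinking slow interval $t\le\epsilon T$ (giving $\dot\theta^2=\theta_d^2+O(\epsilon)$ and $\cos\theta=\cos\alpha+O(\epsilon)$), not from any residual computation; then, for $\psi=\theta-\widetilde\theta$, it keeps the exact $L$ in the kernel (\ref{eqn:Kernel}) and in the forcing $g$, feeding in the just-established $L$-estimate. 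The payoff of this sequential decomposition is that each error equation is linear in its own remainder: no quadratic terms, hence no bootstrap region, no combined quantity $Z$, and no derivative errors to track; your scheme pays for its self-containedness and greater generality with precisely those extra mechanisms. Two repairable imprecisions in your write-up: first, once $|E_L'|,|E_\theta'|$ are put into $Z$, the once-integrated equations produce \emph{constant} kernels rather than $(\tau^+-s)$, so your displayed inequality is not literally of the form required by Lemma \ref{lem:GB} --- on a fixed $[0,T]$ this is harmless (bound $(\tau^+-s)\le T$ and apply the classical Gr\"onwall inequality), but it should be said rather than asserting that constant kernels ``only improve matters''; second, in the divergence form $(L^2\theta')'$ you must divide by $L^2$, so you need the a priori fact that $L$ stays near $1$ for $t\le\epsilon T$ (or build this into the bootstrap region). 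Your observation that the initial-slope mismatch in the fast variable is $O(\epsilon^3)$ rather than exactly zero is correct, and indeed slightly more careful than the paper's own statement.
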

	\begin{proof}
		We begin by finding the asymptotic behaviour of $L$. Write
		\begin{equation}
		L(t) = \widetilde{L}(t) + \lambda(t),
		\end{equation}
		and change the time scale into the strained fast time $\tau^+ = \widetilde{\omega}(\epsilon) \epsilon^{-1} t$ (with a slight abuse of notation). According to (\ref{eqn:MainEq}) and (\ref{eqn:LApprox}) the equation for the remainder $\lambda$ has the form
		\begin{equation}
		\widetilde{\omega}^2 \lambda'' + \lambda = f(\tau^+,\epsilon) + (\epsilon\dot{\theta}(\epsilon \widetilde{\omega}^{-1}\tau^+))^2 \lambda,
		\end{equation}
		where, as before, the differentiation with respect to $\tau^+$ is denoted with a prime while the derivative with respect to $t$ is denoted with a dot, and
		\begin{equation}
		\label{eqn:f}
		\begin{split}
		f(\tau^+,\epsilon) :&= \epsilon\left(1-\widetilde{\omega}^2 - 
		\epsilon^2 \dot{\theta}(\epsilon \widetilde{\omega}^{-1}\tau^+)^2\right) L_d \sin\tau^+ \\
		&+\epsilon^2 \left[\dot{\theta}(\epsilon \widetilde{\omega}^{-1}\tau^+)^2-\cos \theta(\epsilon \widetilde{\omega}^{-1}\tau^+) + \widetilde{\omega}^2 \left(\cos\alpha-\theta_d^2\right)\cos\tau^+\right. \\
		&\left.+ \left(\cos\alpha-\theta_d^2\right)\left(1-\cos\tau^+\right)- \epsilon^2\left(\cos\alpha-\theta_d^2\right)\left(1-\cos\tau^+\right)\dot{\theta}(\epsilon \widetilde{\omega}^{-1}\tau^+)^2 \right].
		\end{split}
		\end{equation}
		Moreover, the initial conditions for the remainder are zero: $\lambda(0)=\dot{\lambda}(0)=0$. Notice that we have deliberately retained the $t$-derivative of $\theta$ evaluated at a point $\epsilon \widetilde{\omega}^{-1}\tau^+$. 
		
		The equation for $\lambda$ can be easily transformed into an integral equation with the use of the Green's function
		\begin{equation}
		\label{eqn:LambdaIntEq}
		\lambda(\tau^+) = \int_0^{\tau^+} \sin\left(\widetilde{\omega}(\epsilon)^{-1}\left(\tau^+-s\right)\right) f(s,\epsilon) ds + \epsilon^2 \int_0^{\tau^+} \sin\left(\widetilde{\omega}(\epsilon)^{-1}\left(\tau^+-s\right)\right) \dot{\theta}\left(\epsilon \widetilde{\omega}(\epsilon)^{-1}s\right)^2 \lambda(s)ds.
		\end{equation}
		Out claim is that $f(\tau^+,\epsilon)=O(\epsilon^3)$ uniformly with respect to $\tau^+\leq T$. To prove it, first notice that by (\ref{eqn:LApprox}) we have
		\begin{equation}
		1-\widetilde{\omega}^2 - \epsilon^2 \dot{\theta}(\epsilon \widetilde{\omega}^{-1}\tau^+)^2 = \epsilon^2\left(\theta_d^2 - \dot{\theta}(\epsilon \widetilde{\omega}^{-1}\tau^+)^2 - \frac{1}{4}\epsilon^2\theta_d^4\right).
		\end{equation}
		Since, by uniform continuity on compact intervals $\dot{\theta}(\epsilon \widetilde{\omega}^{-1}\tau^+)^2 = \theta_d^2 + O(\epsilon)$, the first term in the definition of $f$, namely (\ref{eqn:f}), is $O(\epsilon^4)$ as $\epsilon\rightarrow 0$. Now, the terms in the bracket in (\ref{eqn:f}) can be written as
		\begin{equation}
		\label{eqn:term}
		\begin{split}
		&\dot{\theta}(\epsilon \widetilde{\omega}^{-1}\tau^+)^2-\theta_d^2+\cos\alpha-\cos \theta(\epsilon \widetilde{\omega}^{-1}\tau^+) + \epsilon^2 \left(\cos\alpha-\theta_d^2\right)\left(\dot{\theta}(\epsilon \widetilde{\omega}^{-1}\tau^+)^2-\theta_d^2\right)\cos\tau^+\\
		&- \epsilon^2\left(\cos\alpha-\theta_d^2\right)\dot{\theta}(\epsilon \widetilde{\omega}^{-1}\tau^+)^2 + \frac{1}{4}\epsilon^4 \theta_d^4 \left(\cos\alpha-\theta_d^2\right)\cos\tau^+.
		\end{split}
		\end{equation}
		Note that $\cos\alpha-\cos \theta(\epsilon \widetilde{\omega}^{-1}\tau^+)=O(\epsilon)$.
		Again, we use uniform continuity of $\dot{\theta}$ and conclude that the terms (\ref{eqn:term}) are $O(\epsilon)$. Combining the previous two estimates we conclude that
		\begin{equation}
		|f(\tau^+,\epsilon)| \leq D \epsilon^3 \text{ uniformly for }\tau^+\leq T \text{ for } \epsilon\rightarrow 0,
		\end{equation}
		for some constant $D>0$. Now, from the integral equation (\ref{eqn:LambdaIntEq}) we can infer, for $C>0$, that
		\begin{equation}
		|\lambda(\tau^+)| \leq D\epsilon^3+C\epsilon^2 \int_0^{\tau^+} \left(\tau^+-s\right)|\lambda(s)|ds,
		\end{equation}
		where we have used the fact that $\dot{\theta}$ is bounded and $|\sin(u-v)|\leq|u-v|$. By Lemma \ref{lem:GB} it follows that
		\begin{equation}
		\label{eqn:LambdaEst}
		|\lambda(\tau^+)| \leq D\epsilon^3 \cosh\left(\sqrt{C}\epsilon \tau^+\right) \leq D\epsilon^3 \cosh\left(\sqrt{C}T  \epsilon \right) = O(\epsilon^3) \quad \text{as} \quad \epsilon\rightarrow 0. 
		\end{equation}
		And the first part of the assertion is proved.
		
		The second part proceeds analogously by writing
		\begin{equation}
		\theta(t) = \widetilde{\theta}(t) + \psi(t),
		\end{equation}
		for $\widetilde{\theta}$ defined in (\ref{eqn:ThetaApproxTau}). In the $\tau^+$ scale the governing equation (\ref{eqn:LTau+}) yields the evolution of the remainder
		\begin{equation}
		\label{eqn:PsiPert}
		L\psi''+2 L' \psi' = g(\tau^+,\epsilon) + \widetilde{\omega}^{-2}\epsilon^2 \sin(\widetilde{\theta}+\psi),
		\end{equation}
		where
		\begin{equation}
		g(\tau^+,\epsilon) := 2\epsilon L' \left(-\theta_d+\epsilon(\sin\alpha) \tau^+ - 2\epsilon L_d\theta_d \sin\tau^+  \right) + \epsilon^2 L \left(\sin\alpha - 2 L_d\theta_d \cos\tau^+\right).
		\end{equation}
		We can transform (\ref{eqn:PsiPert}) into its equivalent integral form by multiplying it by $L$ and integrating twice keeping in mind vanishing initial conditions. Hence, 
		\begin{equation}
		\label{eqn:PsiIntEq}
		\psi(\tau^+) = \int_0^{\tau^+} G(\tau^+,s,\epsilon) \left[g(s,\epsilon) + \widetilde{\omega}^{-2}\epsilon^2 \sin\left(\widetilde{\theta}(s)+\psi(s)\right)\right]ds,
		\end{equation}
		where the kernel is defined by
		\begin{equation}
		\label{eqn:Kernel}
		G(\tau^+,s,\epsilon) = L(s)^{-2} \int_s^{\tau^+} L(u) du.
		\end{equation}
		First, we will extract the meaningful information from $g$. To this end use the first part of the theorem to write $L = 1 - \epsilon L_d\sin\tau^+ + O(\epsilon^2)$ uniformly with respect to $\tau^+\leq T$ as $\epsilon\rightarrow 0$ to obtain
		\begin{equation}
		\begin{split}
		g(\tau^+,\epsilon) &= \epsilon^2\left[ 2 L_d \cos\tau^+ \left(\theta_d-\epsilon(\sin\alpha) \tau^+ + 2\epsilon L_d\theta_d \sin\tau^+  \right) \right.\\
		&\left.+ \left(1-2\epsilon L_d\sin\tau^+\right) \left(\sin\alpha - 2 L_d\theta_d \cos\tau^+\right)\right] + O(\epsilon^3).
		\end{split}
		\end{equation}
		Now, the terms with $\cos\tau^+$ cancel leaving
		\begin{equation}
		g(\tau^+,\epsilon) = \epsilon^2 \sin\alpha + O(\epsilon^3),
		\end{equation}
		since all other terms are uniformly bounded. Whence, the integral equation (\ref{eqn:PsiIntEq}) becomes
		\begin{equation}
		\label{eqn:PsiIntEq2}
		\psi(\tau^+) = \epsilon^2\int_0^{\tau^+} G(\tau^+,s,\epsilon) \left[ \widetilde{\omega}^{-2} \sin\left(\widetilde{\theta}(s)+\psi(s)\right)+\sin\alpha + O(\epsilon)\right]ds.
		\end{equation}
		Next, by a simple estimate we have
		\begin{equation}
		\left|\widetilde{\omega}^{-2} \sin\left(\widetilde{\theta}(s)+\psi(s)\right)+\sin\alpha\right| \leq \widetilde{\omega}^{-2} \left|\sin\left(\widetilde{\theta}(s)+\psi(s)\right)+\sin\alpha\right| + \left|\sin\alpha||\widetilde{\omega}^{-2}-1\right|,
		\end{equation}
		which, by the fact that $\widetilde{\theta} +\alpha = O(\epsilon)$, implies
		\begin{equation}
		\label{eqn:SinEst}
		\left|\widetilde{\omega}^{-2} \sin\left(\widetilde{\theta}(s)+\psi(s)\right)+\sin\alpha\right| \leq \left|\widetilde{\omega}^{-2}\right| |\psi(s)| + O(\epsilon).
		\end{equation}
		Moreover, due to asymptotic expansion of $L$ the kernel can be written as
		\begin{equation}
		\label{eqn:KernelAsym}
		G(\tau^+,s,\epsilon) = \tau^+-s + O(\epsilon),
		\end{equation}
		uniformly for $\tau^+\leq T$ as $\epsilon\rightarrow 0$. Therefore, combining (\ref{eqn:SinEst}) and (\ref{eqn:KernelAsym}) with (\ref{eqn:PsiIntEq2}) we arrive at
		\begin{equation}
		|\psi(\tau^+)| \leq E \epsilon^3 + F \epsilon^2 \int_0^{\tau^+} (\tau^+-s) |\psi(s)| ds,
		\end{equation}
		for some constants $E, F > 0$ and we have used the fact that all the $O(\epsilon^3)$ terms are uniform with respect to $\tau^+\leq T$. Invoking Lemma \ref{lem:GB} finally yields
		\begin{equation}
		\label{eqn:PsiEst}
		|\psi(\tau^+)| \leq E \epsilon^3 \cosh\left(\sqrt{F}\epsilon \tau^+\right) \leq E \epsilon^3 \cosh\left(\sqrt{F}\epsilon T\right) = O(\epsilon^3) \quad \text{as} \quad \epsilon\rightarrow 0.
		\end{equation}
		This ends the proof.
	\end{proof}
	
	From the above proof we can immediately spot a place when the usual trade-off between the order of approximation and interval length can be made. Notice that in both final estimates (\ref{eqn:LambdaEst}) and (\ref{eqn:PsiEst}) we could even allow for $\tau^+\leq \epsilon^{-1} T$ and still uniformly bound the exponential term. Therefore, the expansions should be valid on longer $\epsilon$-expanding intervals. As we will see this is only partially true and we loose one order of convergence. 
	\begin{cor}\label{cor:Asym}
		Let $(L(t),\theta(t))$ be the solution of (\ref{eqn:LTau+}). Then, the following asymptotic behaviour holds
		\begin{equation}
		|L(\tau^+) - \widetilde{L}(\tau^+)| = O(\epsilon^2), \quad |\theta(\tau^+) - \widetilde{\theta}(\tau^+)|= O(\epsilon^2), \quad \text{as} \quad \epsilon\rightarrow 0^+,
		\end{equation}
		on an expanding interval $\tau^+\leq \epsilon^{-1}T<\infty$, where $\widetilde{L}$ and $\widetilde{\theta}$ are defined in (\ref{eqn:LApprox}) and (\ref{eqn:ThetaApproxTau}). 
	\end{cor}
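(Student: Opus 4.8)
The plan is to reuse the entire apparatus of the proof of Theorem~\ref{thm:Asym} --- the two remainder equations, their Green's-function integral representations (\ref{eqn:LambdaIntEq}) and (\ref{eqn:PsiIntEq}), and the Gr\"onwall-type Lemma~\ref{lem:GB} --- but to track carefully what changes when the interval is stretched from $\tau^+\leq T$ to $\tau^+\leq\epsilon^{-1}T$. Two competing effects appear. On the favourable side, the exponential factor produced by Lemma~\ref{lem:GB} stays bounded: since $\epsilon\tau^+\leq T$ on the long interval, we have $\cosh(\sqrt{C}\epsilon\tau^+)\leq\cosh(\sqrt{C}T)=O(1)$, exactly as anticipated in the remark following Theorem~\ref{thm:Asym}. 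On the unfavourable side, every forcing estimate in that proof which gained a power of $\epsilon$ from the smallness of a slow argument now loses it, because that argument ranges over an $O(1)$ set. I expect this to be the mechanism behind the single-order degradation from $O(\epsilon^3)$ to $O(\epsilon^2)$.

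I would treat the length first. Revisit the bound $|f(\tau^+,\epsilon)|\leq D\epsilon^3$ coming from (\ref{eqn:f}). On the short interval it rested on $\dot{\theta}(\epsilon\widetilde{\omega}^{-1}\tau^+)^2-\theta_d^2=O(\epsilon)$ and $\cos\alpha-\cos\theta(\epsilon\widetilde{\omega}^{-1}\tau^+)=O(\epsilon)$, both valid only because the evaluation point $\epsilon\widetilde{\omega}^{-1}\tau^+$ was $O(\epsilon)$. For $\tau^+\leq\epsilon^{-1}T$ this point runs through an $O(1)$ interval, so by boundedness of $\dot{\theta}$ and continuity of the cosine these differences are merely $O(1)$ (still uniformly bounded). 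The bracketed terms of (\ref{eqn:f}) are then $O(1)$ and hence $f=O(\epsilon^2)$ uniformly for $\tau^+\leq\epsilon^{-1}T$. Substituting into (\ref{eqn:LambdaIntEq}) and invoking Lemma~\ref{lem:GB} yields $|\lambda(\tau^+)|\leq D\epsilon^2\cosh(\sqrt{C}\epsilon\tau^+)\leq D\epsilon^2\cosh(\sqrt{C}T)=O(\epsilon^2)$, which is the first assertion.

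For $\theta$ I would run the same scheme on (\ref{eqn:PsiIntEq}), feeding in the length estimate just obtained so that $L=\widetilde{L}+O(\epsilon^2)$ keeps $g=\epsilon^2\sin\alpha+O(\epsilon^2)$ and the kernel $G=\tau^+-s+O(\epsilon)$ as in (\ref{eqn:KernelAsym}). The delicate point is that the simplification (\ref{eqn:SinEst}) used $\widetilde{\theta}+\alpha=O(\epsilon)$, which fails on the long interval where $\widetilde{\theta}+\alpha=O(\epsilon\tau^+)=O(1)$. I would therefore split $\sin(\widetilde{\theta}+\psi)=\sin\widetilde{\theta}+[\sin(\widetilde{\theta}+\psi)-\sin\widetilde{\theta}]$, bound the second bracket by $|\psi|$ via Lipschitz continuity so that it enters the Gr\"onwall term with coefficient $\epsilon^2$, and handle the $\psi$-independent residual $\epsilon^2\int_0^{\tau^+}(\tau^+-s)[\widetilde{\omega}^{-2}\sin\widetilde{\theta}(s)+\sin\alpha]\,ds$ on its own. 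Closing the resulting inequality with Lemma~\ref{lem:GB} would then give $|\psi(\tau^+)|\leq E\epsilon^2\cosh(\sqrt{F}\epsilon\tau^+)\leq E\epsilon^2\cosh(\sqrt{F}T)=O(\epsilon^2)$.

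The hard part is precisely the control of that $\psi$-independent residual. Because the Green's kernel grows linearly in $\tau^+-s$ while the integration interval has length $O(\epsilon^{-1})$, a crude pointwise bound $|\widetilde{\omega}^{-2}\sin\widetilde{\theta}+\sin\alpha|=O(1)$ would only produce $\epsilon^2(\tau^+)^2=O(1)$ and destroy the estimate. The one-order (and not worse) loss advertised in the corollary therefore hinges on exploiting the oscillatory and secular-cancellation structure built into the Poincar\'e--Lindstedt approximant $\widetilde{\theta}$, not merely its size: concretely I would integrate by parts in $s$ to transfer the growing factor $\tau^+-s$ onto a primitive of the bounded forcing, or re-derive the residual directly from the defect of $\widetilde{\theta}$ in (\ref{eqn:LTau+}), and then verify that the surviving contribution is genuinely $O(\epsilon^2)$ rather than $O(1)$. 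By contrast the $L$-estimate, where the forcing sits against the same kernel but is already manifestly $O(\epsilon^2)$ after the $\theta_d^2-\dot{\theta}^2$ cancellation, is routine and serves as the template that the $\theta$-analysis must be made to match.
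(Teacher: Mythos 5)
Your treatment of $L$ is the paper's own proof: re-estimate $f$ in (\ref{eqn:f}) on the long interval (the slow argument $\epsilon\widetilde{\omega}^{-1}\tau^+$ now sweeps an $O(1)$ set, so only $|f|\leq C\epsilon^2$ survives) and observe that Lemma \ref{lem:GB} then gives the bounded factor $\cosh(\sqrt{C}\epsilon\tau^+)\leq\cosh(\sqrt{C}T)$. One caveat, which applies equally to the paper's version: the pointwise bound $|f|\leq C\epsilon^2$ alone yields only $\int_0^{\tau^+}|f|\,ds=O(\epsilon)$ when $\tau^+\leq\epsilon^{-1}T$, so to retain $O(\epsilon^2)$ one must additionally use the structure of $f$ --- its resonant terms $\sin\tau^+,\cos\tau^+$ carry $O(\epsilon^3)$ coefficients (thanks to the Lindstedt choice of $\omega_2$), and its slowly varying $O(\epsilon^2)$ part meets the \emph{bounded, oscillatory} kernel $\sin\left(\widetilde{\omega}^{-1}(\tau^+-s)\right)$, so one integration by parts recovers the lost power of $\epsilon$. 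Since the paper makes the identical leap, I count the $L$-half as matching.

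The genuine gap is in the $\theta$-half, exactly at the step you defer: the residual $\epsilon^2\int_0^{\tau^+}(\tau^+-s)\left[\widetilde{\omega}^{-2}\sin\widetilde{\theta}(s)+\sin\alpha\right]ds$ is \emph{not} $O(\epsilon^2)$, and no integration by parts or secular-cancellation argument can make it so, because here the obstruction is itself secular rather than oscillatory: unlike the $L$-equation, the kernel $G\approx\tau^+-s$ grows and does not oscillate, while the bracket is slowly varying. Substituting $u=\epsilon s$, $t=\epsilon\tau^+$, the residual equals $\int_0^t(t-u)\left[\sin\left(-\alpha+\theta_d u-\tfrac12(\sin\alpha)u^2\right)+\sin\alpha\right]du$ up to terms vanishing with $\epsilon$; the integrand vanishes at $u=0$ but has slope $\theta_d\cos\alpha$ there, so the integral is $\tfrac16\theta_d(\cos\alpha)\,t^3+O(t^4)$, an $O(1)$ quantity. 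This is no accident: by Theorem 2 (slow time scale asymptotics), $\theta(t)=\widetilde{\theta}_0(t)+O(\epsilon)$ with $\ddot{\widetilde{\theta}}_0=\sin\widetilde{\theta}_0$, whereas the secular part of $\widetilde{\theta}$ is only the second-order Taylor polynomial of $\widetilde{\theta}_0$; hence at $\tau^+=\epsilon^{-1}T$ one has $\theta-\widetilde{\theta}=\tfrac16\theta_d(\cos\alpha)T^3+O(T^4)+O(\epsilon)$, which does not tend to zero. So the $\theta$-statement of Corollary \ref{cor:Asym}, read uniformly on $\tau^+\leq\epsilon^{-1}T$ with fixed $T$, cannot be established by your route or any other --- it is inconsistent with Theorem 2. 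The paper's own one-line argument ("changed exactly in the same way") silently reuses the estimate $\widetilde{\theta}+\alpha=O(\epsilon)$ whose failure you correctly spotted; your diagnosis is sharper than the paper's treatment, but the proposal relocates the gap and hopes for a cancellation that is not there, so it does not prove the statement. (The damage is contained: only the $L$-half of the corollary is invoked later, in the proof of Theorem 2.)
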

	\begin{proof}
		It suffices to reanalyse the proof of Theorem \ref{thm:Asym}. When estimating the size of (\ref{eqn:f}) we used the assumption that $\epsilon\widetilde{\omega}\tau^+\rightarrow 0$ when $\epsilon\rightarrow 0$ uniformly with respect to $\tau^+$ on compact intervals. Now, since $\epsilon\tau^+$ stays bounded in that limit we can only conclude that $|f(\tau^+,\epsilon)|\leq C\epsilon^2$. The final estimate (\ref{eqn:LambdaEst}) now reads
		\begin{equation}
		|\lambda(\tau^+)| \leq D\epsilon^2 \cosh\left(\sqrt{C}\epsilon \tau^+\right) \leq D\epsilon^2 \cosh\left(\sqrt{C}T\epsilon\right) = O(\epsilon^2) \quad \text{as} \quad \epsilon\rightarrow 0. 
		\end{equation}
		The proof for $\theta$ has to be changed exactly in the same way. We obtain $g(\tau^+,\epsilon) = \epsilon^2 \sin\alpha + O(\epsilon^2)$ and continue the reasoning accordingly. 
	\end{proof}
	We thus see the interplay between the two time scales. The pendulum length $L$ oscillates on the $\tau^+$ scale with the period modulated by the evolution of $\theta$ on the slow scale. Since $\theta(\widetilde{\omega}^{-1}\epsilon\tau^+)$ is uniformly continuous only on compact subsets of $\tau^+$, the asymptotic expansion looses one order to account for that on $\epsilon$-expanding $\tau^+$ intervals. It is also a very well known fact that without coupling between $L$ and $\theta$ the Poincar\'e-Lindstedt series would approximate the solutions with full order on $\epsilon$-expanding intervals. 
	
	Having in mind the above discussion we can prove the leading-order asymptotic expansion of $\theta$ on the slow $t$ scale. 
	\begin{thm}[Slow time scale asymptotics]
		Let $\theta=\theta(t)$ be solution of (\ref{eqn:MainEq}). Then,
		\begin{equation}
		|\theta(t)-\widetilde{\theta}_0(t)| = O(\epsilon) \quad \text{as} \quad \epsilon\rightarrow 0,
		\end{equation}
		uniformly for $t\leq T$, where $\widetilde{\theta}_0$ is defined in (\ref{eqn:ThetaApprox}).  
	\end{thm}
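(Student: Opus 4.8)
The plan is to transfer the control on the pendulum length $L$ established in Corollary \ref{cor:Asym} to the angular equation and then close a Grönwall-type estimate with Lemma \ref{lem:GB}, working directly on the slow scale rather than attempting to reuse the fast-scale approximation $\widetilde{\theta}$.

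First I would record the interval matching. Under $\tau^+ = \widetilde{\omega}(\epsilon)\epsilon^{-1}t$ with $\widetilde{\omega}(\epsilon) < 1$, the slow window $t \leq T$ maps into $\tau^+ \leq \widetilde{\omega}(\epsilon)\epsilon^{-1}T \leq \epsilon^{-1}T$, which is exactly the expanding interval of Corollary \ref{cor:Asym}. There $|L - \widetilde{L}| = O(\epsilon^2)$, and since $\widetilde{L} = 1 + O(\epsilon)$ by (\ref{eqn:LApprox}), this gives the uniform bound $L(t) = 1 + O(\epsilon)$ for $t \leq T$; in particular $L$ stays bounded away from zero for small $\epsilon$, so $L^{-2} = 1 + O(\epsilon)$ uniformly as well.

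The decisive step is to exploit the conservation-law structure of the angular equation in (\ref{eqn:MainEq}). Multiplying $L\ddot{\theta} + 2\dot{L}\dot{\theta} = \sin\theta$ by $L$ produces $\frac{d}{dt}(L^2\dot{\theta}) = L\sin\theta$, and integrating twice with the data $L(0)=1$, $\dot{\theta}(0)=\theta_d$, $\theta(0)=-\alpha$ yields
\[
\theta(t) = -\alpha + \int_0^t L(r)^{-2}\left(\theta_d + \int_0^r L(s)\sin\theta(s)\,ds\right) dr.
\]
This reformulation is what makes the proof work: the coupling term $2\dot{L}\dot{\theta}$, which is only $O(1)$ and rapidly oscillatory on the slow scale (indeed $\dot{L} = O(1)$ there), is swallowed by the total derivative and never estimated on its own. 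Inserting $L^{-2} = 1 + O(\epsilon)$ and $L = 1 + O(\epsilon)$ and using that the bracketed integrand is uniformly bounded on $t \leq T$ collapses the representation to
\[
\theta(t) = -\alpha + \theta_d t + \int_0^t\int_0^r \sin\theta(s)\,ds\,dr + O(\epsilon),
\]
uniformly for $t \leq T$. Integrating the defining equation $\ddot{\widetilde{\theta}}_0 = \sin\widetilde{\theta}_0$ twice with the same initial data gives the identical representation for $\widetilde{\theta}_0$ but without the error term. Writing $\psi = \theta - \widetilde{\theta}_0$, subtracting, swapping the order of integration in the double integral, and using $|\sin a - \sin b| \leq |a - b|$ then leads to
\[
|\psi(t)| \leq D\epsilon + \int_0^t (t-s)\,|\psi(s)|\,ds,
\]
which is precisely the hypothesis of Lemma \ref{lem:GB} with $C = 1$ and constant forcing $f \equiv D\epsilon$; it delivers $|\psi(t)| \leq D\epsilon\cosh t \leq D\epsilon\cosh T = O(\epsilon)$, the claimed bound.

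The main obstacle is conceptual rather than computational: one must resist chaining $\theta \approx \widetilde{\theta} \approx \widetilde{\theta}_0$, because on the expanding interval the fast-scale expansion $\widetilde{\theta}$ of (\ref{eqn:ThetaApproxTau}) is only a quadratic truncation in $t$ and fails to track the genuinely nonlinear slow evolution governed by (\ref{eqn:Theta0}). The conservation-law representation is exactly the device that renders the oscillatory $\dot{L}\dot{\theta}$ coupling harmless and lets the averaged dynamics $\ddot{\theta}_0 = \sin\theta_0$ emerge through cancellation inside the integral.
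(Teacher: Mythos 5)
Your proof is correct, and while it shares the paper's two main ingredients --- the angular-momentum identity $\frac{d}{dt}(L^2\dot{\theta}) = L\sin\theta$ turned into an integral equation, control of $L$ from Corollary \ref{cor:Asym} on the expanding interval, and the Gr\"onwall-type Lemma \ref{lem:GB} --- it forms the remainder equation differently, and in a way that genuinely streamlines the hardest estimate. The paper substitutes $\theta = \widetilde{\theta}_0 + \psi$ back into the differential equation $L\ddot{\theta} + 2\dot{L}\dot{\theta} = \sin\theta$, which reintroduces a forcing term $2\dot{L}\dot{\theta}_0$; since $\dot{L}$ contains $-L_d\widetilde{\omega}\cos(\widetilde{\omega}t/\epsilon)$, which is only $O(1)$ pointwise, the paper must integrate by parts against the rapid oscillation to recover a factor of $\epsilon$ (the $g_3$ term), and along the way needs the bound $\dot{\lambda} = O(\epsilon)$, which does not follow directly from Corollary \ref{cor:Asym} (that corollary controls $\lambda$, not its derivative) and is asserted with little justification. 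Your route subtracts the two \emph{integral} representations instead, so only $L$ and $L^{-2}$ themselves appear, never $\dot{L}$; the pointwise bounds $L = 1+O(\epsilon)$ and $L^{-2} = 1+O(\epsilon)$ on $t\leq T$ (which do follow from Corollary \ref{cor:Asym} plus $\widetilde{L} = 1+O(\epsilon)$ and $L$ bounded away from zero) then suffice, and no cancellation-of-oscillations argument is needed. The remaining steps --- interval matching $t\leq T \Leftrightarrow \tau^+\leq \widetilde{\omega}\epsilon^{-1}T\leq\epsilon^{-1}T$, Fubini to convert the double integral to the kernel $t-s$, the Lipschitz bound on $\sin$, and Lemma \ref{lem:GB} with $f\equiv D\epsilon$, $C=1$ --- are all sound. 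In short: same skeleton, but your way of closing the estimate is cleaner and actually patches a small gap in the paper's treatment of $\dot{\lambda}$.
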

	\begin{proof}
		The integral equation for $\theta$ can be obtained by multiplying the first equation in (\ref{eqn:MainEq}) by $L$ which brings up the angular momentum
		\begin{equation}
		\frac{d}{dt}\left(L^2\dot{\theta}\right) = L\sin\theta.
		\end{equation}
		The above can be integrated twice and manipulated to yield
		\begin{equation}
		\theta(t) = -\alpha + \theta_d \int_0^t L(s)^{-2} ds + \int_0^t G(t,s,\epsilon)\sin\theta(s)ds,
		\end{equation}
		where the kernel is given by (\ref{eqn:Kernel}). Now, write 
		\begin{equation}
		\theta(t) = \widetilde{\theta}_0(t) + \psi(t),
		\end{equation}
		where $\widetilde{\theta}_0(t)$ is a solution of (\ref{eqn:Theta0}) with the original initial conditions (\ref{eqn:ICPolar}) and hence $\psi(0)=\dot{\psi}(0)=0$. Then, by the above argument we have
		\begin{equation}
		\psi(t) = \int_0^t G(t,s,\epsilon)\left(\sin\left(\theta_0(s)+\psi(s)\right)-L(s)\ddot{\theta}_0(s)-2\dot{L}(s)\dot{\theta}_0(s)\right)ds.
		\end{equation}
		Due to Corollary \ref{cor:Asym} we have $L(t) = 1-\epsilon L_d \sin\left(\widetilde{\omega}\epsilon^{-1}t\right) + \lambda(t)$, where $\lambda = O(\epsilon^2)$ as $\epsilon\rightarrow 0$ uniformly for $t\leq T$ since a compact interval for $t$ is $\epsilon$-expanding for $\tau^+$. Hence
		\begin{equation}
		\begin{split}
		\psi(t) &= \int_0^t G(t,s,\epsilon) \left(\sin\left(\theta_0(s)+\psi(s)\right)-\sin\theta_0(s)\right) ds \\
		&+ \int_0^t G(t,s,\epsilon) \left(\epsilon L_d\sin\left(\widetilde{\omega}(\epsilon)\frac{s}{\epsilon}\right)- \lambda(s)\right) \sin\theta_0(s) ds \\
		&+ 2 \int_0^t G(t,s,\epsilon) \dot{\theta}_0(s) \left(L_d \widetilde{\omega}(\epsilon) \cos\left(\widetilde{\omega}(\epsilon)\frac{s}{\epsilon}\right) - \dot{\lambda}(s) \right)ds =: g_1(t) + g_2(t) + g_3(t),
		\end{split}
		\end{equation}
		where we used the fact that $\ddot{\theta}_0 = \sin\theta_0$. 
		
		The $\psi$ term is associated with $g_1$ and we can estimate it as follows
		\begin{equation}
		|g_1(t)| \leq E_1\int_0^t (t-s) |\psi(s)| ds,
		\end{equation}
		where $E_1>0$. Further, for some constants $E_2, F_2 > 0$, the next term is simply
		\begin{equation}
		|g_2(t)| \leq F_2\int_0^t (t-s) \left(\epsilon + \epsilon^2\right)ds \leq E_2 \epsilon,
		\end{equation}
		by the fact that $t\leq T$. Lastly, we have $\dot{\lambda} = O(\epsilon)$ and by integration by parts we obtain
		\begin{equation}
		\begin{split}
		&\int_0^t G(t,s,\epsilon) \dot{\theta}_0(s) \widetilde{\omega}(\epsilon) \cos\left(\widetilde{\omega}(\epsilon)\frac{s}{\epsilon}\right)ds =\left[ \epsilon G(t,s,\epsilon) \dot{\theta}_0(s) \sin\left(\widetilde{\omega}(\epsilon)\frac{s}{\epsilon}\right)\right]_{s=0}^t \\
		&- \epsilon \int_0^t \frac{d}{ds} \left( G(t,s,\epsilon) \dot{\theta}_0(s)\right) \sin\left(\widetilde{\omega}(\epsilon)\frac{s}{\epsilon}\right)ds.
		\end{split}
		\end{equation}
		The term in the brackets vanishes because $G(t,t,\epsilon)=0$ which leaves us with
		\begin{equation}
		|g_3(t)|\leq E_3 \epsilon,
		\end{equation}
		where $E_3>0$. Combining our results for $g_i$, $i=1, 2, 3$, we now have
		\begin{equation}
		|\psi(t)|\leq E\epsilon + F \int_0^t (t-s)|\psi(s)|ds,
		\end{equation}
		for some constants $E,F>0$. Invoking Lemma \ref{lem:GB} yields
		\begin{equation}
		|\psi(t)|\leq E\epsilon \cosh \left(\sqrt{F} t\right) \leq E\epsilon \cosh \left(\sqrt{F} T\right) = O(\epsilon) \quad \text{as} \quad \epsilon\rightarrow 0.
		\end{equation}
		The proof is complete.
	\end{proof}
	
	We have thus found the exact asymptotic expansion of $L$ and $\theta$ on two time scales. Now, we will proceed to applying these results to solving the boundary value problems stated at the beginning of this paper. 
	
	\section{Boundary value problem}
	
	Armed with above results we will proceed to reanalyse Problem \ref{prob:main}. First, we will prove that we can always find its unique solution at least for sufficiently small initial angles $\alpha$. 
	
	\begin{thm}
		\label{thm:BVP}
		There exists a number $\alpha_0>0$ such that Problem \ref{prob:main} has a unique solution for $|\alpha|<\alpha_0$. Moreover, 
		\begin{equation}
		\label{eqn:TStar}
		t^* = \epsilon\pi + O(\epsilon^3) \quad \text{as} \quad \epsilon\rightarrow 0,
		\end{equation}
		and
		\begin{equation}
		\label{eqn:KApprox}
		K^* \approx \widetilde{K}^* := \left(\frac{\pi \theta_d}{2\alpha}\right)^2 = \left(\frac{\pi }{2\alpha}\left(U\cos\alpha-V\sin\alpha\right)\right)^2.
		\end{equation}
	\end{thm}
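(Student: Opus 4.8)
The plan is to convert Problem~\ref{prob:main} into a pair of scalar transcendental equations and then resolve them perturbatively. By Theorem~\ref{thm:Asym} the exact solution agrees with $\widetilde{L}$ and $\widetilde{\theta}$ up to $O(\epsilon^3)$, uniformly on compact $\tau^+$-intervals; since the sought time will turn out to satisfy $\tau^+(t^*)=O(1)$, I may legitimately replace $L,\theta$ by their approximations and write the two boundary conditions as $\widetilde{L}(\tau^*,\epsilon)=1+O(\epsilon^3)$ and $\widetilde{\theta}(\tau^*,\epsilon)=\alpha+O(\epsilon^3)$, where $\tau^*:=\tau^+(t^*)$ is the smallest positive root. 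First I would extract the leading balance. Reading off $\widetilde{L}=1-\epsilon L_d\sin\tau^*-\epsilon^2(\cos\alpha-\theta_d^2)(1-\cos\tau^*)$, the condition $\widetilde{L}=1$ forces $\sin\tau^*=O(\epsilon)$, so that the first return of the spring to equilibrium occurs at $\tau^*=\pi+O(\epsilon)$ (the root $\tau^*=0$ being excluded, and the sign $L_d>0$ for $0<\alpha<\pi/2$ guaranteeing the intermediate compression $L<1$). Substituting $\tau^*=\pi$ into $\widetilde{\theta}=-\alpha+\epsilon\theta_d\tau^*-\tfrac12\epsilon^2(\sin\alpha)(\tau^*)^2+2\epsilon^2 L_d\theta_d(1-\cos\tau^*)$ and imposing $\widetilde{\theta}=\alpha$ then yields, at leading order, $\epsilon\pi\theta_d=2\alpha$, i.e. $\epsilon\approx 2\alpha/(\pi\theta_d)$ and hence $K^*=\epsilon^{-2}\approx(\pi\theta_d/2\alpha)^2=\widetilde{K}^*$. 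This already exhibits the announced formula and shows that $\alpha$ and $\epsilon$ are forced to vanish together.

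The rigorous existence and uniqueness is the crux, and the main obstacle is that the limit is singular: the target $\epsilon\sim 2\alpha/(\pi\theta_d)$ itself tends to the degenerate point $\epsilon=0$ as $\alpha\to 0$, so one cannot simply invoke the implicit function theorem at a fixed regular point. I would remove this degeneracy by treating $\epsilon$ as the primary unknown and $\alpha$ as dependent. For each small $\epsilon>0$ the map $\tau^+\mapsto\widetilde{L}(\tau^+,\epsilon)-1$ has $\tau^+$-derivative equal to $\epsilon L_d\neq 0$ at $\tau^+=\pi$, so it is locally invertible and determines a unique $\tau^*(\epsilon)=\pi+O(\epsilon)$ depending smoothly on $\epsilon$. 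Plugging this into the angle condition produces a single scalar relation $H(\epsilon,\alpha):=\widetilde{\theta}(\tau^*(\epsilon),\epsilon)-\alpha=\epsilon\pi\theta_d-2\alpha+O(\epsilon^2)=0$; since the resulting $\alpha(\epsilon)=\tfrac12\pi\theta_d\,\epsilon+O(\epsilon^2)$ is strictly increasing for small $\epsilon$ (here $\theta_d\to U>0$), it inverts to a unique $\epsilon=\epsilon(\alpha)$, giving a unique $(K^*,t^*)$ for $|\alpha|<\alpha_0$. The delicate technical point is that the $O(\epsilon^3)$ remainders inherited from Theorem~\ref{thm:Asym} are only known to be uniform, not a priori smooth in $(\tau^+,\epsilon)$; I would therefore base existence on the intermediate value theorem applied to the reduced scalar equation and uniqueness on its strict monotonicity, rather than on a bare application of the implicit function theorem, or else first upgrade the remainder bounds to $C^1$-smallness by differentiating the integral equations of Theorem~\ref{thm:Asym}.

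Finally, the asymptotics of $t^*$ follow from $t^*=\epsilon\tau^*/\widetilde{\omega}(\epsilon)$ with $\widetilde{\omega}(\epsilon)=1-\tfrac12\theta_d^2\epsilon^2$. Evaluating at $\tau^*=\pi$ gives $t^*=\epsilon\pi(1+\tfrac12\theta_d^2\epsilon^2+\cdots)=\epsilon\pi+O(\epsilon^3)$, and restoring $\theta_d=U\cos\alpha-V\sin\alpha$ recovers $\widetilde{K}^*=\big(\pi(U\cos\alpha-V\sin\alpha)/2\alpha\big)^2$. The remaining bookkeeping, which I expect to be the subtle part of matching the precise order in (\ref{eqn:TStar}), is to carry the correction $\tau^*-\pi$ through $t^*=\epsilon\tau^*/\widetilde{\omega}$ and verify at which order it enters: the $O(\epsilon)$ shift of $\tau^*$ coming from the $\epsilon^2$ term of $\widetilde{L}$ a priori feeds an $O(\epsilon^2)$ contribution into $t^*$, so one must check the cancellation (or vanishing coefficient) that leaves the stated $O(\epsilon^3)$ error intact, which is exactly the interplay of the two time scales emphasised after Corollary~\ref{cor:Asym}.
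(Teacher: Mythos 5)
Your proposal is correct in substance and follows the same two-step reduction as the paper: first locate the return time $\tau^*=\pi+o(1)$ from the condition $L=1$, then read off $\epsilon\approx 2\alpha/(\pi\theta_d)$ from $\theta=\alpha$, which is exactly (\ref{eqn:KApprox}). The one genuine divergence is in how existence and uniqueness are made rigorous. You assert that one cannot invoke the Implicit Function Theorem at a fixed regular point because the target $\epsilon$ degenerates to $0$ as $\alpha\to 0$; the paper does precisely that, and it works. The point is that the paper does not apply the IFT to $\widetilde{\theta}$ plus an $O(\epsilon^3)$ remainder (which, as you rightly observe, is only known to be uniformly small, not $C^1$-small). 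Instead it encodes the exact boundary condition $\theta(\tau^*(\epsilon))=\alpha$ through the Green's-kernel integral equation derived from (\ref{eqn:LTau+}), namely $F(\alpha,\epsilon):=-2\alpha+\epsilon\widetilde{\omega}^{-1}\theta_d\int_0^{\tau^*(\epsilon)}L(s,\epsilon)^{-2}\,ds+\epsilon^2\widetilde{\omega}^{-2}\int_0^{\tau^*(\epsilon)}G(\tau^*(\epsilon),s,\epsilon)\sin\theta(s,\epsilon)\,ds=0$, where $L$ and $\theta$ are the \emph{exact} solutions; smoothness of $F$ then comes from smooth dependence of ODE solutions on parameters, and $F(0,0)=0$ together with $\partial F/\partial\epsilon(\alpha,0)=\pi\theta_d\neq 0$ gives $\epsilon=\epsilon(\alpha)$ directly at the degenerate corner. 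Your alternative (parametrize by $\epsilon$, show $\alpha(\epsilon)=\tfrac12\pi\theta_d\,\epsilon+O(\epsilon^2)$ is strictly increasing, invert) is a hand-rolled inverse function theorem and is viable, but carrying it out would still require exactly the $C^1$ control you defer to ``upgrading the remainder bounds''; the exact integral-equation formulation is the cleaner repair of the gap you yourself identify, so you should adopt it rather than work with the approximants. Finally, your closing caveat about the order in (\ref{eqn:TStar}) is well taken and should not be waved away: the paper's own quadratic-in-$\cos\mu$ computation yields $\cos\tau^*=-1+O(\epsilon^2)$, which by itself gives only $\tau^*=\pi+O(\epsilon)$ (the correction being $\approx 2\epsilon(\cos\alpha-\theta_d^2)/L_d$), hence $t^*=\epsilon\pi+O(\epsilon^2)$; obtaining the stated $O(\epsilon^3)$ requires precisely the additional bookkeeping you flag, and the paper does not spell it out.
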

	\begin{proof}
		We will work on the fast $\tau^+$ scale. Since $L = \widetilde{L}+O(\epsilon^3)$ the time $\tau^*$ of the first return of $L$ to its initial condition satisfies
		\begin{equation}
		\label{eqn:TauStar}
		L_d \sin\tau^* + \epsilon \left(\cos\alpha-\theta_d^2\right)\left(1-\cos\tau^*\right) + O(\epsilon^2) = 0,
		\end{equation}
		as $\epsilon\rightarrow 0$. We can see that to the leading order $\tau^*\approx \pi$. This observation can be made more accurate. Let $\mu$ be the solution of the following equation
		\begin{equation}
		L_d \sin\mu + \epsilon \left(\cos\alpha-\theta_d^2\right)\left(1-\cos\mu\right) = 0,
		\end{equation}
		then, by classical theory we will have $\tau^* = \mu + O(\epsilon^2)$ as $\epsilon\rightarrow 0$. Since $\mu>0$ we have $\sin\mu = \sqrt{1-\cos^2\mu}$ and the above is a quadratic equation in $\cos\mu$. The solution is
		\begin{equation}
		\cos\mu = -1 + \frac{2\epsilon^2\left(\cos\alpha-\theta_d^2\right)^2}{L_d^2+\epsilon^2\left(\cos\alpha-\theta_d^2\right)^2}.
		\end{equation}
		Hence, the skipped terms in (\ref{eqn:TauStar}) are of the same order as the difference $\tau^*-\mu$. Therefore,
		\begin{equation}
		\tau^* = \pi + O(\epsilon^2) \quad \text{as} \quad \epsilon\rightarrow 0,
		\end{equation}
		and (\ref{eqn:TStar}) follows. 
		
		Now, since we know $\tau^*$ we require that $\theta(\tau^*) = \alpha$. Multiplying $\theta$ equation (\ref{eqn:LTau+}) by $L$, integrating twice, and using the kernel (\ref{eqn:Kernel}) we obtain
		\begin{equation}
		\alpha=\theta(\tau^*(\epsilon)) = -\alpha+\epsilon \widetilde{\omega}^{-1}\theta_d\int_0^{\tau^*(\epsilon)} L(s,\epsilon)^{-2}ds + \epsilon^2 \widetilde{\omega}^{-2} \int_0^{\tau^*(\epsilon)} G(\tau^*(\epsilon),s,\epsilon) \sin\theta(s,\epsilon) ds.
		\end{equation}
		We have to show that there exists a number $\epsilon(\alpha)$ for which the above has a solution. To this end define
		\begin{equation}
		F(\alpha,\epsilon) = -2\alpha+\epsilon\widetilde{\omega}^{-1}\theta_d\int_0^{\tau^*(\epsilon)} L(s,\epsilon)^{-2}ds + \epsilon^2 \widetilde{\omega}^{-2} \int_0^{\tau^*(\epsilon)} G(\tau^*(\epsilon),s,\epsilon) \sin\theta(s,\epsilon) ds.
		\end{equation} 
		Observe that $F(0,0) = 0$ and
		\begin{equation}
		\begin{split}
		\frac{\partial F}{\partial \epsilon} (\alpha,\epsilon )&= \widetilde{\omega}^{-1}\theta_d\int_0^{\tau^*(\epsilon)} L(s,\epsilon)^{-2}ds \\
		&+ \epsilon \left(\frac{\partial}{\partial\epsilon}\left(\widetilde{\omega}^{-1}\theta_d\int_0^{\tau^*(\epsilon)} L(s,\epsilon)^{-2}ds \right)+ 2 \widetilde{\omega}^{-2} \int_0^{\tau^*(\epsilon)} G(\tau^*(\epsilon),s,\epsilon) \sin\theta(s,\epsilon) ds \right) \\
		&+ \epsilon^2 \frac{\partial}{\partial\epsilon}\left(\widetilde{\omega}^{-2} \int_0^{\tau^*(\epsilon)} G(\tau^*(\epsilon),s,\epsilon) \sin\theta(s,\epsilon) ds\right).
		\end{split}
		\end{equation}
		When we put $\epsilon = 0$ only the first term above survives and hence
		\begin{equation}
		\frac{\partial F}{\partial \epsilon} (\alpha,0) = \theta_d \pi \neq 0.
		\end{equation}
		Therefore, by the Implicit Function Theorem it follows that there exists a number $\alpha_0$ and a function $\epsilon:(-\alpha_0,\alpha_0)\rightarrow \mathbb{R}$, such that $F(\alpha_0,\epsilon(\alpha_0))=0$. The boundary value problem has thus a unique solution. 
		
		The last part of the proof is to find an approximation to the solution. Since $\tau^* \approx \pi + O(\epsilon^2)$ we can use $\widetilde{\theta}$ to determine $\epsilon(\alpha)$ when we truncate $O(\epsilon^2)$ terms. We have
		\begin{equation}
		\alpha = -\alpha + \epsilon\theta_d \pi.
		\end{equation}
		Solving and remembering that $\epsilon = K^{-1/2}$ yields (\ref{eqn:KApprox}) and the proof is complete. 
	\end{proof}
	
	After proving our results it is required to find the conditions for $\widetilde{L}$, $\widetilde{\theta}$ and $\widetilde{K}^*$ to be good approximations of the corresponding quantities. First of all, the perturbation expansion of $L$ and $\theta$ have been obtained under the assumption that $K\rightarrow\infty$ (i.e. $\epsilon\rightarrow 0$) with all other parameters fixed. We can \textit{a posteriori} verify the assumptions on these. This can be done by noting that the subsequent terms in the expansions (\ref{eqn:LApprox}) and (\ref{eqn:ThetaApproxTau}) have to be of higher order. That is to say, we require that $\epsilon^2 \theta_d^2 \ll \epsilon L_d$ and $\epsilon^2 L_d \theta_d \ll \epsilon\theta_d$. Expressing this in terms of $K$ yields the consistency condition
	\begin{equation}
	\label{eqn:ConsistencyCondition}
	\frac{1}{\sqrt{K}} \theta_d^2 \ll L_d \ll \sqrt{K}, \quad K\gg 1.
	\end{equation}
	For example, the above is satisfied if $L_d,\theta_d = O(1)$. Moreover, from (\ref{eqn:KApprox}) we read that the requirement of $K\gg 1$ simultaneously keeping the conditions (\ref{eqn:ConsistencyCondition}) satisfied forces $\alpha \ll 1$. Hence, the angle should be small what is also consistent with the data. Further, if we go back to (\ref{eqn:ThetaLdUV}) and express $L_d$ and $\theta_d$ in terms of the Froude numbers $U$ and $V$, we see that $L_d, \theta_d \approx U$ since $V\leq U$ for fixed $\alpha$. Whence, (\ref{eqn:ConsistencyCondition}) reduce to
	\begin{equation}
	U \ll \sqrt{K}, \quad K \gg 1.
	\end{equation}
	Moreover, from the above we can infer about the validity of the approximation (\ref{eqn:KApprox}) which can be plugged into above to conclude that
	\begin{equation}
		\frac{2 U}{\pi} \alpha \ll V \ll \frac{\pi U}{2\alpha},
	\end{equation}
	which is consistent with the small angle assumption. Referring to Tab. \ref{tab:Parameters} we see that our asymptotic approximations are valid in the realistic regime of parameters. 
	
	In \cite{Mc90} several approximations of $K^*$ have been proposed. Authors claimed that for slower velocities, the dependence of $K^*$ on $U$ should be quadratic while for faster velocities, linear. In both of these cases Authors gave very complex fitted empirical formulas which closely reproduced the numerical results. Our approximation (\ref{eqn:KApprox}) gives a systematic explanation of the leading order behaviour of $K^*$ for small angles. It can also be treated as an approximation of the quadratic part of dependence on $U$. Note however, that some other components of the velocity might be missing and finding them is a subject of our future work. Furthermore, in the cited work Authors introduce the so-called effective vertical stiffness $K_{vert}$ which reduce to $K$ when the subject is required to jump vertically (i.e. $\alpha=0$ and $V=0$). Authors heuristically derive that
	\begin{equation}
	K_{vert}\approx \left(Q\frac{\pi U}{\sin\alpha}\right)^2,
	\end{equation}
	where $Q$ is an unknown constant dependent on the contact time. Notice the similarity to our systematically devised result (\ref{eqn:KApprox}). This suggest that for small velocities and angles, the two stiffness parameters behave in a similar fashion. 
	
	We will illustrate our results with several numerical simulations. On Fig. \ref{fig:Error} we can see an exemplary verification of Theorem \ref{thm:Asym}. Error is calculated on the fast $\tau^+$ scale by choosing a compact interval $[0,\pi]$ and comparing solutions of (\ref{eqn:LTau+}) with their approximations (\ref{eqn:LApprox}) and (\ref{eqn:ThetaApproxTau}) at $\tau^+=\pi$ for the worst case. Observe that on the log-log scale the plots become parallel to the superimposed $K^{-3/2}$ line indicating the correct order of convergence. Note also that we have used the original variables $K$, $U$ and $V$. 
	
	\begin{figure}
		\centering
		\includegraphics{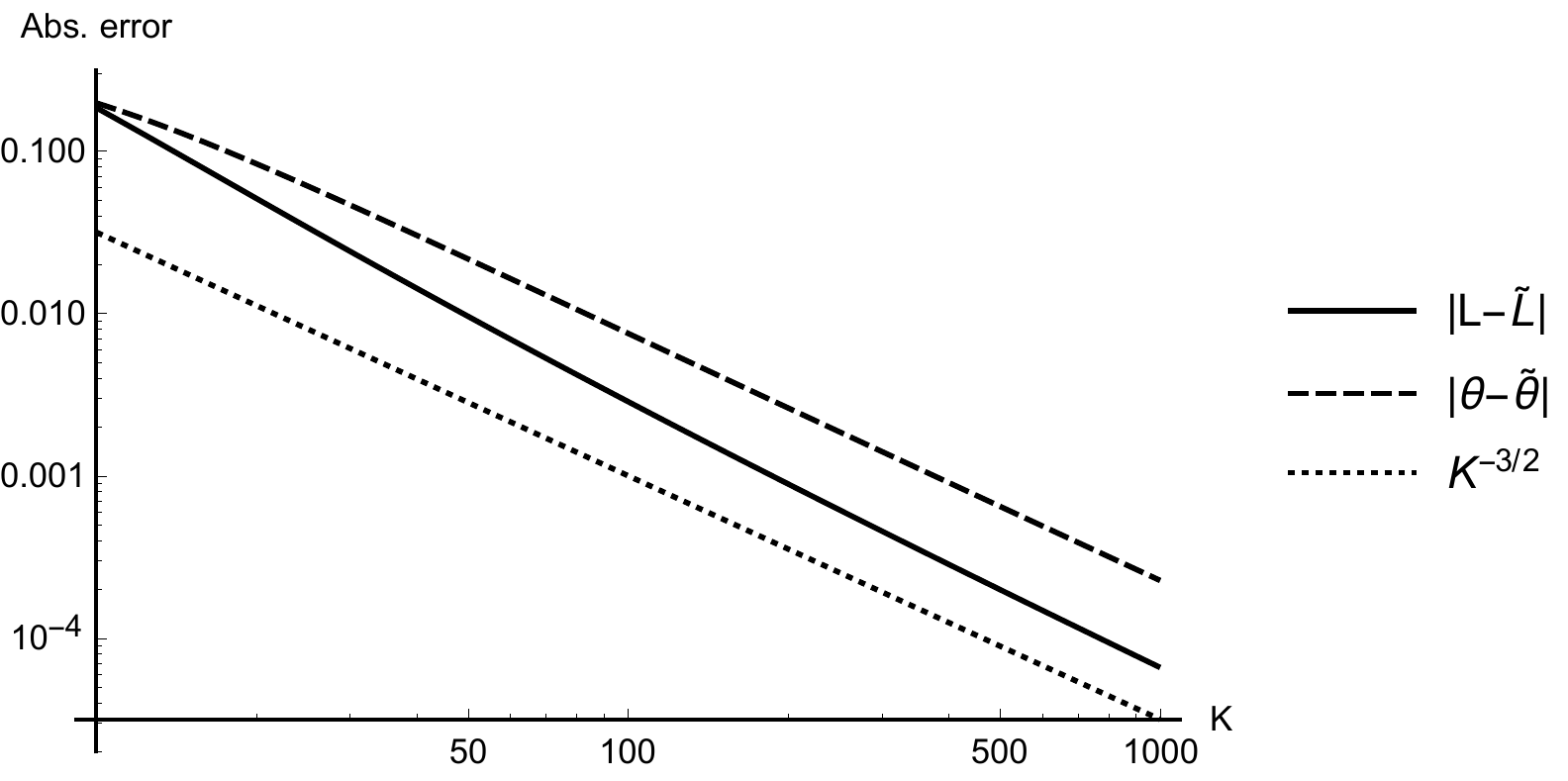}
		\caption{Absolute error of the approximations (\ref{eqn:LApprox}) and (\ref{eqn:ThetaApproxTau}) of the solutions to (\ref{eqn:LTau+}) plotted on the log-log scale. The line $y = K^{-3/2}$ is superimposed for comparison. Here, $\alpha=0.4$, $U=1$, $V=0.1$, and $\tau^+\in[0,\pi]$. }
		\label{fig:Error}
	\end{figure}

	The next example concerns the validity of (\ref{eqn:KApprox}) as the approximation to the solution of Problem \ref{prob:main}. Since, as we noted above, the natural assumption for its accuracy is $\alpha\ll 1$, we compare $K^*$ with $\widetilde{K}^*$ for different values of the angle. Results are given on Fig. \ref{fig:KK}. We can see that both values are close to each other and their ration converge to $1$ when $\alpha\rightarrow 0^+$. Note, however, that although decently accurate, $\widetilde{K}^*$ is only the leading order approximation for $K^*$. Finding the subsequent corrections is the aim of our future work. 
	
	\begin{figure}
		\centering
		\includegraphics[width=0.49\textwidth]{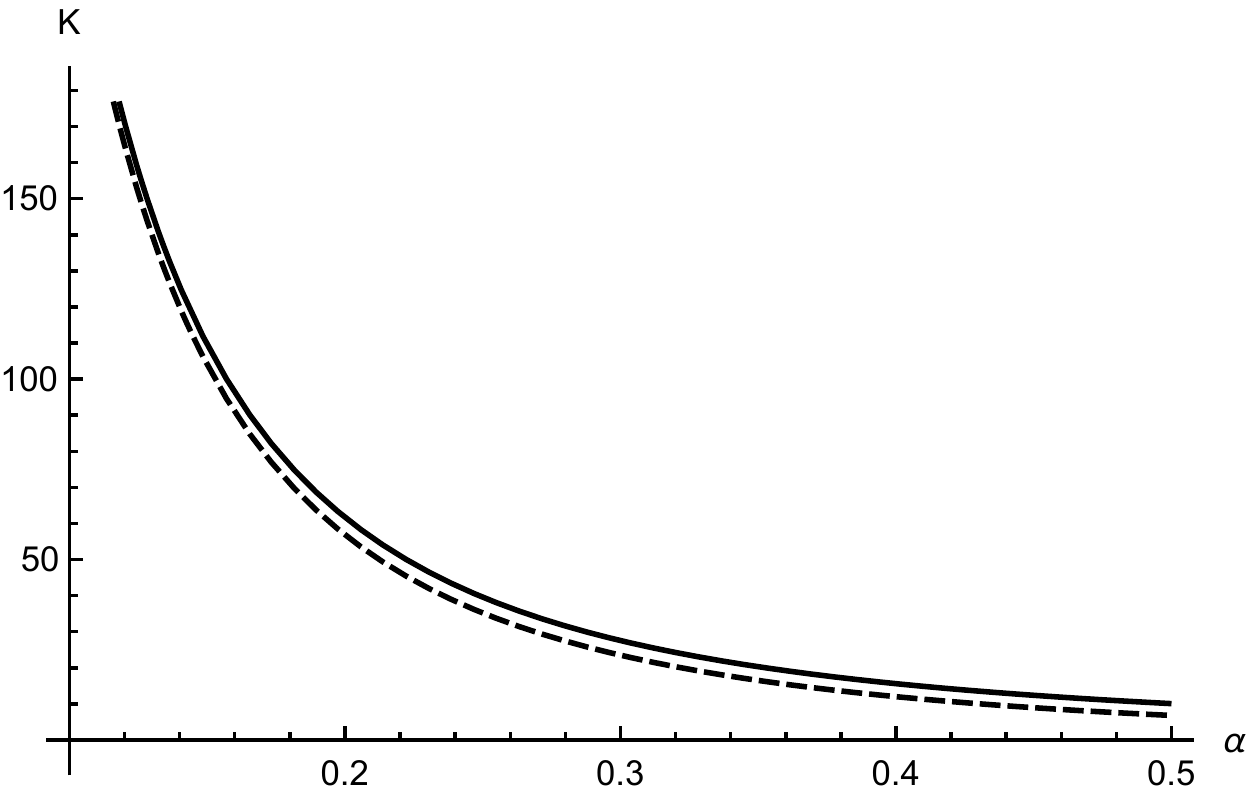}
		\includegraphics[width=0.49\textwidth]{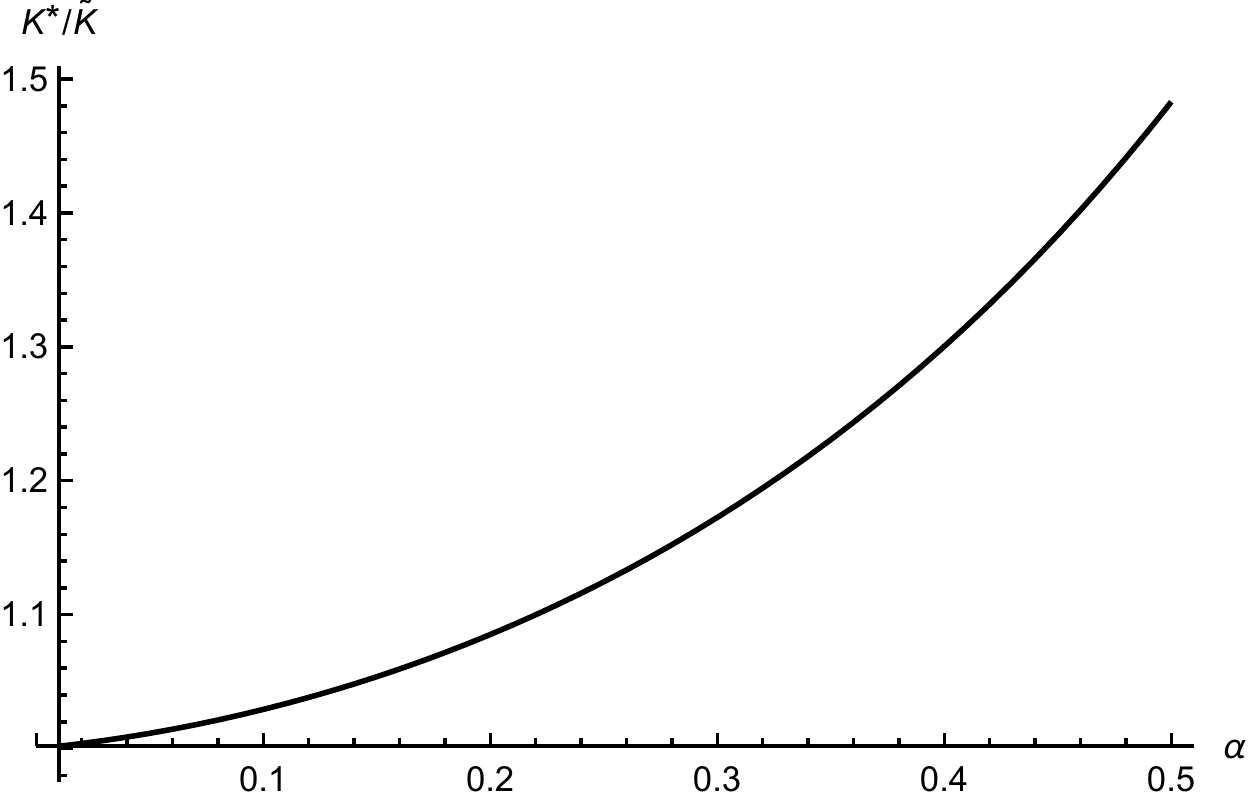}
		\caption{Comparison between the numerically calculated value of $K^*$ and its approximation $\widetilde{K}^*$ calculated from (\ref{eqn:KApprox}) for varying $\alpha$. On the left: $K^*$ (solid line) and $\widetilde{K}^*$ (dashed line). On the right: ratio of $K^*$ to $\widetilde{K}$. Here, $U=1$ and $V=0.1$.}
		\label{fig:KK}
	\end{figure}
	
	\section{Conclusion}
	We have solved a nonlinear boundary value problem that is very natural to modelling legged locomotion. It appeared that the equations live on multiple time scales, however, the problem had its solution on the fast scale $\tau^+$. Having proved the validity of asymptotic expansions we had used them in applying the Implicit Function Theorem to grant the existence and uniqueness of solution to Problem \ref{prob:main}. It is also worth to mention that the approximation of the stiffness (\ref{eqn:KApprox}) is consistent with all of the previously speculated features of the numerical solution. We have thus justified several claims about its behaviour for a realistic regime of parameters.
	
	The further work is will be based on considering expansions for larger velocities and finding the subsequent terms in the expansion for $K^*$ with respect to small $\alpha$. As was noted in \cite{Mc90} the stiffness starts to depend linearly for large values of $U$. We also plan to justify this claim analytically. 
	
%	\bibliography{asymptotics}
%	\bibliographystyle{plain}

\end{document}